\documentclass[twoside]{article}  
\usepackage{lipsum}
\usepackage{graphicx}
\usepackage{graphicx}
\usepackage{amssymb}
\usepackage{amsthm}
\textwidth 14.3cm \textheight 21.45cm \oddsidemargin 0.6cm
\evensidemargin 0.6cm \topmargin 0cm
\usepackage{titlesec}
\titleformat*{\section}{\large\bfseries\centering}
\newtheorem{thm}{Theorem}
\newtheorem{cor}{Corollary}
\newtheorem{lem}{Lemma}
\newtheorem{pro}{Proposition}
\newtheorem{cla}{Claim}

\theoremstyle{remark}

\date{}
\newcommand\blfootnote[1]{
  \begingroup
  \renewcommand\thefootnote{}\footnote{#1}
  \addtocounter{footnote}{-1}
  \endgroup}

\pagestyle{myheadings} \markboth{\centerline {MAREK LASSAK}}{\centerline {APPROXIMATION OF CONVEX BODIES BY POLYTOPES}}

\begin{document}

\title{\textbf{Approximation of convex bodies by polytopes with respect to minimal width and diameter}}

\author{MAREK LASSAK \\
\\
Institute of Mathematics and Physics,  University of Science and \\ Technology, Kaliskiego 7, Bydgoszcz 85-789, Poland \\
e-mail: marek.lassak@utp.edu.pl}

\maketitle

\blfootnote{\textit{Key words and phrases:} Approximation, convex body, polytope, minimal width, diameter, diametral chord, reduced body}\blfootnote{\textit{Mathematics Subject Classification:} Primary 52A27.} 

\begin{abstract} Denote by ${\mathcal K}^d$ the family of convex bodies in $E^d$ and by $w(C)$ the minimal width of $C \in {\mathcal K}^d$. We ask for the greatest number $\Lambda_n ({\mathcal K}^d)$ such that every $C \in {\mathcal K}^d$ contains a polytope $P$ with at most $n$ vertices for which $\Lambda_n ({\mathcal K}^d) \leq \frac{w(P)}{w(C)}$. We give a lower estimate of $\Lambda_n ({\mathcal K}^d)$ for $n \geq 2d$ based on estimates of the smallest radius of $\big\lfloor {\frac{n}{2}} \big\rfloor$ antipodal pairs of spherical caps that cover the unit sphere of $E^d$. We show that $\Lambda_3 ({\mathcal K}^2) \geq {\frac 1 2}(3- \sqrt 3)$, and $\Lambda_n ({\mathcal K}^2) \geq \cos {\frac \pi {2 \lfloor {n/2} \rfloor}}$ for every $n \geq 4$. We also consider the dual question of estimating the smallest number $\Delta_n ({\mathcal K}^d)$ such that every $C \in {\mathcal K}^d$ there exists a polytope $P \supset C$ with at most $n$ facets for which $\frac{{\rm diam}(P)}{{\rm diam}(C)} \leq \Delta_n ({\mathcal K}^d)$. We give an upper bound of $\Delta_n ({\mathcal K}^d)$ for $n \geq 2d$. In particular, $\Delta_n ({\mathcal K}^2) \leq 1/ \cos {\frac \pi {2 \lfloor {n/2} \rfloor}}$ for $n \geq 4$.  
\end{abstract}

\maketitle

\section{Introduction and results}
As usual, by a \emph{convex body} of the $d$-dimensional Euclidean space $E^d$ we mean a bounded convex set with non-empty interior.
Denote by ${\mathcal K}^d$ the family of convex bodies in $E^d$ and by ${\mathcal M}^d$ the subfamily of centrally symmetric convex bodies. 
The convex hull of the union of two different parallel hyperplanes $H_1$ and $H_2$ in $E^d$ is called a \emph{strip}. 
If $H_1$ and $H_2$ are perpendicular to a direction $m$, then $S$ is said to be a \emph{strip of direction} $m$.
The distance between $H_1$ and $H_2$ is called the \emph{width} of $S$.
By the \emph{width} of a bounded set (in particular, of a convex body) $C \subset E^d$ \emph{in direction} $m$ we understand the width of the smallest strip of direction $m$ containing $C$.
The \emph{minimal width} of $C$ (in some papers called also the \emph{thickness} of $C$) is denoted by $w(C)$.
We tacitly assume that the considered polytopes are convex.

For any $C \in {\mathcal K}^d $ we set

$$\lambda_n(C) = \sup \Big\{ {\frac {w(P)} {w(C)}}; \ {\rm where} \ P \subset C \ {\rm is \ a \ polytope \ with \ at \ most}\ n \ {\rm vertices} \Big\}.$$ 

This number makes also sense in the more general situation when $P \subset C$ are any bounded sets in $E^d$. 
For instance, in~\cite{[T]} it is considered for the family of finite subsets of $E^2$ instead of ${\mathcal K}^2$. 

For every non-empty family ${\mathcal B}$ of convex bodies in $E^d$ we define

$$\Lambda_n({\mathcal B}) = \inf \{ \lambda_n (C) ; \ C \in {\mathcal B} \}.$$

Clearly, if $m \leq n$, then $\lambda_m (C) \leq \lambda_n (C)$ for every convex body $C \subset E^d$. 
Of course, $\Lambda_m({\mathcal B}) \leq \Lambda_n({\mathcal B})$ for any family ${\mathcal B}$ of convex bodies of $E^d$. 

The main aim of the paper is to give some estimates of $\Lambda_n ({\mathcal K}^d)$ and $\Lambda_n ({\mathcal M}^d)$, which means that we deal with the approximation of convex bodies by contained polytopes with at most 
$n$ vertices with respect to the minimal width (it is easy to see that the same estimates are for inscribed polytopes).
So the paper presents mostly a continuation of the research from \cite{[GL]}, where a number of estimates of this kind are given. 
For $d=2$ this task is also stated in \cite{[BMP]}, see Problem 3 on p. 452 there.

In this Section we present our results, the proofs are given in Section 4.
Only Proposition \ref{Lambda_4} is obtained in this Section as a result of a discussion on inscribing even-gons of large minimal width in a disk.

The proof of our main Theorem \ref {theorem-main} exhibits a very simple but effective method of estimating $\Lambda_n ({\mathcal K}^d)$ for every integer $n \geq 2d$. 
It is based on estimates of the smallest possible angular radius of $k$ pairs of antipodal spherical caps that cover the unit sphere of $E^d$.
It is denoted by $a_{k}^d$. 
In other words, $a_{k}^d$ is the smallest positive real for which there exist $k$ straight lines through the origin such that the angle between any straight line through the origin and at least one of these $k$ lines is at most $a_{k}^d$.
A few estimates of $a_{k}^d$ are recalled in Section 3.
Also some more are established in Lemma \ref{a_k^2}. 
In the following theorem and its application, we take 
$k= {\lfloor {\frac n 2} \rfloor}$.

\begin{thm} \label{theorem-main} 
For any integers $d \geq 2$ and $n \geq 2d$ we have

$$\Lambda_n ({\mathcal K}^d) \geq \cos a_{\lfloor {\frac n 2} \rfloor}^d.$$
\end{thm}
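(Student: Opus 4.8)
The plan is to establish the bound $\Lambda_n(\mathcal{K}^d) \geq \cos a_{\lfloor n/2 \rfloor}^d$ by showing that for an arbitrary convex body $C \in \mathcal{K}^d$ we can inscribe a polytope $P$ with at most $n$ vertices whose minimal width is at least $\cos a_{\lfloor n/2 \rfloor}^d \cdot w(C)$. Set $k = \lfloor n/2 \rfloor$. By the definition of $a_k^d$, there exist $k$ lines through the origin such that every direction on the unit sphere lies within angular distance $a_k^d$ of at least one of these lines. Equivalently, there are $k$ unit vectors $u_1, \dots, u_k$ such that for every unit vector $m$, some $u_i$ satisfies that the angle between $m$ and $\pm u_i$ is at most $a_k^d$.

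First I would normalize the problem. Fix the direction $m_0$ realizing the minimal width of $C$, so $w(C)$ is the width of $C$ in direction $m_0$. For each of the $k$ chosen directions $u_i$, I would select a chord of $C$ (or more precisely, a pair of boundary points of $C$) that is "long" in the direction $u_i$ — the natural choice being a diametral chord of $C$ in direction $u_i$, i.e. a chord realizing the width of $C$ in direction $u_i$. Each such chord contributes two endpoints, giving at most $2k \leq n$ vertices. Taking $P$ to be the convex hull of these $2k$ points yields an inscribed polytope with at most $n$ vertices.

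The heart of the argument is the width estimate for $P$. To bound $w(P)$ from below, I would take an arbitrary direction $m$ and estimate the width of $P$ in direction $m$, then show the minimum over all $m$ is at least $\cos a_k^d \cdot w(C)$. By the covering property of $\{u_i\}$, the given direction $m$ makes an angle at most $a_k^d$ with some $u_i$ (up to sign). The chord of $P$ associated with $u_i$ has length equal to the width of $C$ in direction $u_i$, which is at least $w(C)$; projecting this chord onto the direction $m$ shrinks its length by a factor of at least $\cos a_k^d$ (since the chord is essentially aligned with $u_i$, and $u_i$ is within angle $a_k^d$ of $m$). Hence the width of $P$ in direction $m$ is at least $\cos a_k^d \cdot w(C)$, and taking the infimum over $m$ gives $w(P) \geq \cos a_k^d \cdot w(C)$, completing the estimate.

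The main obstacle I anticipate is making the projection bound fully rigorous, because a diametral chord in direction $u_i$ need not be geometrically parallel to $u_i$ — its length in the sense of the width is the distance between two parallel supporting hyperplanes perpendicular to $u_i$, not a Euclidean segment length aligned with $u_i$. I would therefore be careful to work with the correct notion: the width of $C$ in direction $u_i$ equals the distance between the two supporting hyperplanes of $C$ perpendicular to $u_i$, and the endpoints I select lie on these two hyperplanes, so their separation measured in direction $u_i$ is exactly the width in direction $u_i$. The angle between $m$ and $u_i$ being at most $a_k^d$ then controls how the projection onto $m$ compares to the projection onto $u_i$, yielding the factor $\cos a_k^d$. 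Verifying this inequality between projections onto two directions separated by angle at most $a_k^d$ is the key technical step, but it reduces to an elementary observation once the geometry is set up correctly.
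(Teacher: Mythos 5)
Your outline matches the paper's: choose $k=\lfloor n/2\rfloor$ lines realizing $a_k^d$, attach to each a long chord of $C$, take the convex hull of the at most $2k\le n$ endpoints, and bound the width of this polytope in an arbitrary direction $m$ by projecting the chord whose line lies within angle $a_k^d$ of $m$. However, the step you yourself flag as ``the key technical step'' contains a genuine gap, and your proposed resolution does not close it. You select, for the direction $u_i$, a chord whose endpoints lie on the two supporting hyperplanes of $C$ perpendicular to $u_i$; such a chord has projection onto $u_i$ equal to the width of $C$ in direction $u_i$, but the chord itself can be arbitrarily tilted away from $u_i$. For a segment with direction vector $v$, writing $m=\cos\theta\,u_i+\sin\theta\,e$ with $e\perp u_i$ gives $\langle v,m\rangle=\cos\theta\,\langle v,u_i\rangle+\sin\theta\,\langle v,e\rangle$, and the second term can cancel the first; so the projection onto $m$ is \emph{not} bounded below by $\cos\theta$ times the projection onto $u_i$. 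Concretely, for the triangle with vertices $(0,0)$, $(1,5)$, $(0.5,1)$ and $u_i=(1,0)$, the only chord joining the supporting lines $x=0$ and $x=1$ is the segment from $(0,0)$ to $(1,5)$; its projection onto the direction $m$ making an angle of about $11.3^\circ$ with $u_i$ is $0$, whereas your claimed bound would give about $0.98$. Since nothing prevents all $k$ of your chords from being nearly mutually parallel, the resulting hull can genuinely be thin in some direction.

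The paper avoids this by using a different, and here essential, notion of diametral chord: the \emph{longest chord of $C$ parallel to} $L_i$. Such a chord is genuinely aligned with $L_i$, so the projection estimate is immediate ($|D_i|\cos\theta\ge w(C)\cos a_k^d$); the nontrivial input is then the classical fact (Part 33 of Bonnesen--Fenchel, quoted as Claim \ref{diametral-chord}) that the minimum over all directions of the length of the longest parallel chord equals $w(C)$, hence every such chord has Euclidean length at least $w(C)$. In short, your chord has the right extent in direction $u_i$ but an uncontrolled direction, while the correct chord has the right direction and its length is controlled by this lemma, which is the ingredient missing from your argument. (Also, fixing the direction $m_0$ realizing $w(C)$ in your first paragraph is never used and can be dropped.)
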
  

The paper \cite{[GL]} gives the estimate $\Lambda_{n}({\mathcal K}^d) \geq {\frac 1 d}$ for every $n \geq d+1$. 
In Corollary \ref{Lambda2d} below for $n = 2d$ (and thus for every $n \geq 2d$) this estimate is improved.

\begin{cor} \label{Lambda2d} 
We have 
$\Lambda_{2d}({\mathcal K}^d) \geq {\frac 1 {\sqrt d}}$  for $d \geq 2$ and 
$\Lambda_{2d+2}({\mathcal K}^d)\geq \sqrt{\frac 3 {3d-2}}$ for $d \geq 3$.
\end{cor}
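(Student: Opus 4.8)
The plan is to specialize Theorem~\ref{theorem-main} to $n=2d$ and $n=2d+2$ and then supply explicit configurations of lines realizing the two required estimates of $a_d^d$ and $a_{d+1}^d$. Since $\lfloor 2d/2\rfloor=d$ and $\lfloor(2d+2)/2\rfloor=d+1$, the theorem gives at once
$$\Lambda_{2d}({\mathcal K}^d)\geq \cos a_d^d, \qquad \Lambda_{2d+2}({\mathcal K}^d)\geq \cos a_{d+1}^d.$$
So it remains to show $\cos a_d^d\geq 1/\sqrt d$ and $\cos a_{d+1}^d\geq\sqrt{3/(3d-2)}$. I will use that $\cos a_k^d$ is the largest number $c$ for which some $k$ lines through the origin, with unit direction vectors $n$, satisfy $\max_n|u\cdot n|\geq c$ for every unit vector $u$; hence in each case it suffices to exhibit $k$ lines for which $\min_u\max_n|u\cdot n|$ is at least the claimed value.

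For $a_d^d$ I would take the $d$ coordinate axes $e_1,\dots,e_d$. For any unit vector $u$ one has $\sum_i u_i^2=1$, so $\max_i|u_i|\geq 1/\sqrt d$; thus these $d$ lines give $\cos a_d^d\geq 1/\sqrt d$, which is the first inequality, valid for all $d\geq 2$.

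For $a_{d+1}^d$, where now $d\geq 3$, the plan is to split the $d+1$ lines as $d-2$ coordinate axes $e_3,\dots,e_d$ together with three equiangular lines lying in the plane $\{x_3=\cdots=x_d=0\}$, i.e.\ with directions $(\cos\frac{k\pi}{3},\sin\frac{k\pi}{3},0,\dots,0)$ for $k=0,1,2$. For a unit vector $u$ write $\rho=(u_1^2+u_2^2)^{1/2}$ for the length of its projection onto that plane. The three planar lines cover the circle with angular radius $\pi/6$, so one of them yields $|u\cdot n|\geq\rho\cos\frac{\pi}{6}=\rho\frac{\sqrt3}{2}$, while the coordinate axes give $\max_{i\geq 3}|u_i|\geq\sqrt{(1-\rho^2)/(d-2)}$ because $\sum_{i\geq 3}u_i^2=1-\rho^2$. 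Hence $\max_n|u\cdot n|\geq\max\bigl(\rho\frac{\sqrt3}{2},\sqrt{(1-\rho^2)/(d-2)}\bigr)$ for every $u$, so $\min_u\max_n|u\cdot n|$ is at least the minimum over $\rho\in[0,1]$ of the right-hand side. The first term increases and the second decreases in $\rho$, so this minimum is attained where the two coincide; solving $\frac34\rho^2=(1-\rho^2)/(d-2)$ gives $\rho^2=4/(3d-2)$ and common value $\sqrt{3/(3d-2)}$. This yields $\cos a_{d+1}^d\geq\sqrt{3/(3d-2)}$ and completes the proof.

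The routine parts are the two variance-type inequalities $\max_i u_i^2\geq 1/d$ and $\max_{i\geq 3}u_i^2\geq(1-\rho^2)/(d-2)$ together with the one-variable minimization. The only genuinely delicate point is selecting the right configuration for $a_{d+1}^d$: the obvious candidates, namely the regular simplex on $d+1$ vertices or the $d$ axes augmented by the main diagonal, only reproduce the weaker bound $1/\sqrt d$, whereas concentrating the ``extra'' line into an equiangular triple in a single $2$-plane is what produces the crossing point $\rho^2=4/(3d-2)$ and hence the exact constant $\sqrt{3/(3d-2)}$.
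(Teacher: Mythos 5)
Your proposal is correct and follows essentially the same route as the paper: the corollary is obtained there by combining Theorem~\ref{theorem-main} with Lemma~\ref{a_k^2}, whose proof uses exactly your two configurations — the $d$ coordinate axes for $a_d^d$, and $d-2$ axes together with three equiangular lines in the $x_1x_2$-plane (the points $(\pm\frac12,\frac{\sqrt3}{2},0,\dots,0)$ and the $x_1$-axis) for $a_{d+1}^d$. The only cosmetic difference is that you phrase the second estimate as a min-max over the projection length $\rho$ with a crossing point at $\rho^2=4/(3d-2)$, whereas the paper runs the equivalent case split on whether some $|u_i|$, $i\geq 3$, already exceeds $\sqrt{3/(3d-2)}$.
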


\begin{cor} \label{Lambda6} 
$\Lambda_6({\mathcal K}^3) \geq {\frac 1 {\sqrt 3}} \ (\approx 0.577)$, 
$\Lambda_8({\mathcal K}^3) \geq 0.654$, 
$\Lambda_{10}({\mathcal K}^3) \geq 0.695$, 
$\Lambda_{12}({\mathcal K}^3) \geq {\frac 1 {\sqrt 3}}\cot {\frac \pi 5} \ (\approx 0.794)$, 
$\Lambda_{14}({\mathcal K}^3) \geq 0.806$,   
$\Lambda_{16}({\mathcal K}^3) \geq  0.833$.   
\end{cor}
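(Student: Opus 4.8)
The plan is to read each of the six inequalities as a direct instance of Theorem \ref{theorem-main} specialized to $d = 3$, so that proving the corollary reduces to inserting, for each listed $n$, a good enough estimate of the covering quantity $a_{\lfloor n/2 \rfloor}^3$ recalled in Section 3 and then taking its cosine. Since every $n$ here is even, $\lfloor n/2 \rfloor = n/2$, and the six assertions correspond to $k = 3, 4, 5, 6, 7, 8$ in turn. Because $\cos$ is decreasing on $[0, \frac{\pi}{2}]$, any upper estimate $a_k^3 \leq \alpha_k$ produces the lower estimate $\Lambda_{2k}({\mathcal K}^3) \geq \cos \alpha_k$, and it is precisely such upper estimates for $a_k^3$ that Section 3 supplies.

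First I would dispose of the two cases admitting a closed form. For $k = 3$ (that is, $n = 6$) the three coordinate axes cover every direction within angle $\arccos \frac{1}{\sqrt 3}$, the extremal direction being $(1,1,1)/\sqrt 3$; hence $a_3^3 = \arccos \frac{1}{\sqrt 3}$ and $\cos a_3^3 = \frac{1}{\sqrt 3}$. This is consistent with, and is in fact exactly, the case $d = 3$ of Corollary \ref{Lambda2d}. For $k = 6$ (that is, $n = 12$) I would invoke the arrangement of the six lines joining antipodal vertices of a regular icosahedron; the associated covering radius recorded in Section 3 gives $\cos a_6^3 = \frac{1}{\sqrt 3}\cot \frac{\pi}{5}$, and substitution into Theorem \ref{theorem-main} yields $\Lambda_{12}({\mathcal K}^3) \geq \frac{1}{\sqrt 3}\cot \frac{\pi}{5} \approx 0.794$.

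For the remaining values $k = 4, 5, 7, 8$ (that is, $n = 8, 10, 14, 16$) I would substitute the best configurations of $k$ lines through the origin whose covering radii $a_k^3$ are tabulated in Section 3, evaluate $\cos a_k^3$, and round down to obtain the decimals $0.654$, $0.695$, $0.806$, $0.833$ respectively. No fresh geometry enters at this stage: each of these four inequalities is simply the numerical image under $\cos$ of the corresponding entry for $a_k^3$, combined with Theorem \ref{theorem-main}.

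Consequently the corollary itself carries essentially no independent difficulty; it is a specialization of Theorem \ref{theorem-main} to $d = 3$ together with the catalogue of estimates for $a_k^3$. The only points demanding care are, first, confirming that the extremal configurations used in the closed-form cases $k = 3$ and $k = 6$ are indeed the coordinate frame and the icosahedral frame, so that the clean expressions $\frac{1}{\sqrt 3}$ and $\frac{1}{\sqrt 3}\cot \frac{\pi}{5}$ are legitimate, and second, performing the rounding in the four numerical cases in the safe downward direction so that the stated lower bounds remain valid. The genuine labour — establishing or citing the upper bounds on $a_k^3$ — resides in Section 3 and the sphere-covering literature it draws on, not in the corollary.
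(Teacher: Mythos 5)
Your proposal is correct and coincides with the paper's own (one-line) proof: the corollary is exactly Theorem \ref{theorem-main} with $d=3$ combined with the estimates $a_3^3 \leq (54.7356)^\circ$, $a_4^3 \leq (49.1066)^\circ$, $a_5^3 \leq (45.9243)^\circ$, $a_6^3 \leq (37.3774)^\circ$, $a_7^3 \leq (36.2060)^\circ$, $a_8^3 \leq (33.5473)^\circ$ recalled at the start of Section 3, whose cosines give the six stated bounds after downward rounding. Your identification of the coordinate-frame and icosahedral configurations for $k=3$ and $k=6$ matches the source of those estimates.
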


\begin{cor} \label{Lambda_n} 
For every integer $n\geq 4$ we have $\Lambda_n ({\mathcal K}^2) \geq \cos {\frac \pi {2 \cdot \lfloor {n/2} \rfloor}}$.   
\end{cor}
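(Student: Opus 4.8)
The plan is to obtain this statement purely as a specialization of Theorem~\ref{theorem-main} to the planar case $d=2$, combined with the exact value of the covering quantity $a^2_k$ on the circle. First I would check the hypotheses: with $d=2$ the condition $n\geq 2d$ of Theorem~\ref{theorem-main} reads $n\geq 4$, which is exactly the standing assumption of the corollary. Applying that theorem with $d=2$ therefore yields at once
$$\Lambda_n(\mathcal{K}^2)\geq \cos a^2_{\lfloor n/2\rfloor}.$$
So the whole task reduces to evaluating $a^2_k$ for $k=\lfloor n/2\rfloor$.

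Next I would pin down $a^2_k$ by the definition given in the introduction: it is the smallest angular radius $a$ for which there exist $k$ lines through the origin of $E^2$ such that every direction makes an angle at most $a$ with one of them. Equivalently, the $2k$ endpoints of these $k$ lines on the unit circle are the centres of $2k$ antipodal arcs of angular radius $a$, and these must cover the whole circle. The lower bound I would argue as follows: the $2k$ endpoints split the circle into $2k$ arcs whose lengths sum to $2\pi$, so the largest gap between consecutive centres has length at least $\frac{2\pi}{2k}=\frac{\pi}{k}$; a point at the middle of such a gap lies at distance at least $\frac{\pi}{2k}$ from both neighbouring centres (and even farther from every other centre, since any other centre can only be reached by first passing one of the two gap endpoints), so coverage forces $a\geq \frac{\pi}{2k}$. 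For the matching upper bound I would take the $k$ equally spaced directions $\frac{j\pi}{k}$, $j=0,\dots,k-1$, whose endpoints distribute uniformly around the circle at angular spacing $\frac{\pi}{k}$ (this placement is automatically antipodally symmetric because $2k$ is even), and observe that arcs of radius $\frac{\pi}{2k}$ then exactly cover the circle. Hence $a^2_k=\frac{\pi}{2k}$.

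Finally, substituting $k=\lfloor n/2\rfloor$ gives $a^2_{\lfloor n/2\rfloor}=\frac{\pi}{2\lfloor n/2\rfloor}$, and since the cosine is decreasing on $[0,\tfrac{\pi}{2}]$ I may insert this value directly into the displayed bound to conclude
$$\Lambda_n(\mathcal{K}^2)\geq \cos\frac{\pi}{2\lfloor n/2\rfloor},$$
which is the asserted inequality.

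I expect essentially no obstacle here: the substantive content is carried entirely by Theorem~\ref{theorem-main}, and the determination $a^2_k=\frac{\pi}{2k}$ is the elementary circle-covering computation sketched above, which is presumably exactly what Lemma~\ref{a_k^2} records. If that lemma already states $a^2_k=\frac{\pi}{2k}$, then the proof collapses to a one-line citation together with the monotonicity of cosine; otherwise the only care needed is to verify that the worst gap argument and the equally spaced configuration genuinely match at $\frac{\pi}{2k}$.
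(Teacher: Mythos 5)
Your proposal is correct and follows exactly the paper's route: the author derives Corollary \ref{Lambda_n} by specializing Theorem \ref{theorem-main} to $d=2$ and invoking the first statement of Lemma \ref{a_k^2}, namely $a^2_m=\frac{\pi}{2m}$ (which the paper declares obvious and you verify with the standard gap argument). No gaps; your write-up merely fills in the elementary circle-covering computation that the paper omits.
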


Clearly, this estimate for every even $n\geq 4$ says nothing other than that $\Lambda_n ({\mathcal K}^2)$ and  $\Lambda_{n+1} ({\mathcal K}^2)$ are at least $\cos {\frac \pi n}$. 
A question is how to get an estimate for $n+1$ better than $\cos {\frac \pi n}$ for an even $n$. 
In particular, an estimate for $n+1=5$ better than $\frac {\sqrt 2} 2$ for $n=4$.

In \cite{[GL]} it is conjectured that $\Lambda_3 ({\mathcal K}^2) = 6/(3+\sqrt 3 \tan 72^\circ) \approx 0.720$ and that this is attained for the regular pentagon. 
Moreover, it is shown there that $\Lambda_3 ({\mathcal K}^2) > 0.583$.
We improve this estimate in the second part of the following proposition.

\begin{pro} \label{Lambda_3} 
Every planar convex body $C$ contains a regular triangle of minimal width ${\frac 1 2}(3- \sqrt 3)\cdot w (C)$ and
$\Lambda_3 ({\mathcal K}^2) \geq {\frac 1 2}(3- \sqrt 3)$ (this number is approximately $0.634$).  
\end{pro}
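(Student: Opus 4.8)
The second assertion is an immediate consequence of the first. A regular triangle is a polytope with three vertices, so if $C$ contains a regular triangle $P$ with $w(P)\ge\frac12(3-\sqrt3)\,w(C)$, then by definition $\lambda_3(C)\ge w(P)/w(C)\ge\frac12(3-\sqrt3)$, and taking the infimum over all $C\in{\mathcal K}^2$ yields $\Lambda_3({\mathcal K}^2)\ge\frac12(3-\sqrt3)$. Hence the whole task is the first statement, and by homogeneity I would normalize $w(C)=1$ and look for a regular triangle in $C$ of minimal width at least $\frac12(3-\sqrt3)=\frac{\sqrt3}2(\sqrt3-1)$. Since the minimal width of a regular triangle equals its altitude $\frac{\sqrt3}2 s$, with $s$ the side length, this amounts to inscribing in $C$ a regular triangle of side at least $\sqrt3-1$.

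For the construction, the plan is first to pass to a reduced body: every convex body contains a reduced body of the same minimal width (take a subset minimal under inclusion among convex bodies of width $1$; the intersection of a nested family of such bodies still has width $1$ by compactness, so Zorn applies, and the minimal body is reduced), and a triangle inscribed in that subset is inscribed in $C$. So I may assume $C$ is reduced with $w(C)=1$. Next I would fix a minimal-width strip, say of vertical direction, bounded by $y=0$ and $y=1$, and take a corresponding width chord $pq$, a segment of length $1$ joining the two contact points, with $C$ contained in the strip it determines; by convexity $pq\subset C$. I then build a regular triangle two of whose vertices are anchored to this chord and whose third vertex is pushed to one side, using a rotation/intermediate-value argument to keep all three vertices in $C$: for the one-parameter family of candidate regular triangles attached to $pq$ I track when the free vertex first reaches $\partial C$.

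The decisive point is to certify that the free vertex lies in $C$ and to bound $s$ from below, and here one must use $w(C)=1$ in \emph{every} direction, not merely the vertical one. The three outward side-normals of a regular triangle are $120^\circ$ apart, and the width conditions in these three directions control how far the three supporting lines of $C$ may be pushed inward, hence the largest regular triangle of a given orientation that still fits. Optimizing the orientation so as to balance these three constraints, the extremal incidence one is led to is the configuration in which the altitude and the remaining room in the strip satisfy $1=\frac{\sqrt3}2 s+\frac s2$, i.e. $s=\sqrt3-1$, giving minimal width $\frac{\sqrt3}2 s=\frac12(3-\sqrt3)$, exactly the required constant.

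The hard part is precisely this containment step. Relaxing $C$ to a handful of its points, say to the convex hull of $p$, $q$ and one far boundary point, is too lossy: such a polygon typically has width well below $1$, so the global width hypothesis genuinely has to be invoked in all directions at once rather than through a finite relaxation. Reducing to a reduced body and exploiting the known structure of its width chords is what makes the extremal orientation analysis tractable and isolates the value $\frac12(3-\sqrt3)$; verifying that this orientation can always be attained, for every reduced $C$, is the step I expect to require the most care.
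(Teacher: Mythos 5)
Your reduction of the second assertion to the first, the normalization $w(C)=1$, the passage to a reduced body via Claim \ref{width-reduced}, and the identification of the extremal relation $s\frac{\sqrt3+1}{2}=1$, $s=\sqrt3-1$, are all fine. But the heart of the matter --- actually exhibiting a regular triangle of side $\geq\sqrt3-1$ inside an arbitrary reduced body of width $1$ --- is not proved; you explicitly defer ``the containment step'' and the attainability of the extremal orientation as the parts ``requiring the most care,'' which is to say the proof is missing. A one-parameter family of triangles anchored to a single width chord, combined with an unspecified ``extremal orientation analysis,'' does not yet yield the bound: you never produce a concrete region inside $C$ that is guaranteed to contain the triangle, nor a mechanism forcing the free vertex to stay in $C$.

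Your guiding intuition that ``relaxing $C$ to a handful of its points is too lossy'' is in fact the opposite of what the paper does, and this is precisely the idea you are missing. The paper relaxes $R$ to the convex hull of just four points: the endpoints of two \emph{perpendicular diametral chords} (longest chords in their directions), which by Claim \ref{diametral-chord} each have length at least $w(C)$ and by Lemma \ref{two-chords} must intersect. A continuity argument (Lemmas \ref{continuously} and \ref{direction}, plus the intermediate value theorem applied to a rotation by $\pi$) selects the direction $\delta_0$ in which the perpendicular chord bisects the first one. The resulting quadrangle $Q\subset R$ with diagonals $a_0c_0\perp pr$, $|a_0c_0|=|pr|=w(R)$, $b_0$ the midpoint of $a_0c_0$ and $|pb_0|\leq|b_0r|$, is then shown by a short explicit computation (two cases on the position $k$ of $b_0$ along $pr$) to contain a regular triangle of side at least $\frac{2w(R)}{\sqrt3+1}$, hence of minimal width $\frac12(3-\sqrt3)w(R)$, with the worst case at $k=\frac12$. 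So the global width hypothesis is not invoked ``in all directions at once''; it enters only through the single fact that diametral chords have length at least $w(C)$, and the finite relaxation to a quadrangle is exactly what makes the argument close. Without this (or an equivalent device), your proposal has a genuine gap at its central step.
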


Observe that the regular triangle from Proposition \ref{Lambda_3} not always may be enlarged to a regular triangle inscribed in $C$.

In connection with the first statement of Proposition \ref{Lambda_3} we recall two facts.
First, for bodies of constant width, the estimate increases up to approximately $0.73911$, as proved by Eggleston and Taylor \cite{[ET]}. 
Secondly, every planar convex body of unit minimal width contains a square of minimal width $4- 2\sqrt 3 \approx 0.536$ as shown by Eggleston \cite{[E]}. 

A natural question appears what size regular triangle (respectively, a square) of any given direction of a side can be placed in every planar convex body of unit minimal width. 
In other words, we ask how large regular triangle (respectively, square) may be ``rotated" in every such a body.  
By later Claim \ref{width-reduced}, it is sufficient to consider these questions for reduced bodies (see the definition in Section 3).
The author conjectures that the regular triangle of minimal width $0.5$ and the square of minimal width $\sqrt {2- \sqrt 3} = {\frac 1 2}(\sqrt 6 - \sqrt 2) \approx 0.518$ may be ``rotated inside" every convex body of minimal width $1$, and that in both cases the regular triangle is the worst body (i.e., that the ``rotated" triangle and square cannot be enlarged). 

Now consider finding wide polygons in a centrally symmetric convex body.
Recall that by part (2) of Theorem of \cite{[GL]} the value $\Lambda_n ({\mathcal M}^d)$ is attained for balls.  
In particular, finding $n$-gons of the largest minimal width in a disk $D \subset E^2$ 
gives the lower bound of $\Lambda_n ({\mathcal M}^2)$.
Of course, we may limit the consideration to the case when $D$ is of the unit minimal width. 

\begin{pro} \label{inscribed n-gon} 
 If $n \geq 3$ is odd, then the $n$-gon of the largest minimal width inscribed in the disk $D \subset E^2$ is only the regular $n$-gon. 
We have $\Lambda_n ({\mathcal M}^2) = \lambda_n (D) = {\frac 1 2} + {\frac 1 2}\cos {\frac 1 n}\pi$. 
\end{pro}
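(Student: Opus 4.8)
The plan is to combine the two reductions recalled just before the statement: $\Lambda_n({\mathcal M}^2)$ is attained for balls, so $\Lambda_n({\mathcal M}^2)=\lambda_n(D)$ for a disk $D$, which I normalise to unit minimal width, i.e. radius $r=\frac12$; and $\lambda_n$ may be computed using inscribed polytopes. Thus everything reduces to showing that among all $n$-gons inscribed in $D$ the minimal width is maximised, uniquely, by the regular $n$-gon, with value $\frac12+\frac12\cos\frac\pi n$. Existence of a maximiser follows from compactness of the family of inscribed $n$-gons, so it remains to pin down its shape and value.

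First I would record the standard fact that the minimal width of a convex polygon is attained in a direction normal to one of its edges. On each maximal range of directions for which the two parallel supporting lines touch a fixed pair of vertices $V_a,V_b$, the width equals $|V_a-V_b|\cos(\theta-\psi)$, a concave function of $\theta$ whose minimum over the range occurs at an endpoint, and the endpoints are precisely the edge-normal directions. Hence $w(P)=\min_i w_i$, where $w_i$ is the width of $P$ perpendicular to its $i$-th edge. Writing the vertices at angles $\alpha_1<\dots<\alpha_n$ on the circle of radius $r=\frac12$, letting $2\beta_i=\alpha_{i+1}-\alpha_i$ be the central angle of the $i$-th edge and $\mu_i$ its midpoint direction, the supporting line in direction $\mu_i$ is the edge line, at distance $r\cos\beta_i$ from the centre, while the opposite supporting line passes through the vertex nearest in angle to $\mu_i+\pi$; if $\gamma_i$ denotes that angular distance, this line lies at distance $r\cos\gamma_i$ from the centre, so that $w_i=r(\cos\beta_i+\cos\gamma_i)$.

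The decisive step is a pigeonhole argument based on $\sum_i\beta_i=\pi$. Suppose $w(P)\ge r\big(1+\cos\frac\pi n\big)$; then every $w_i\ge r\big(1+\cos\frac\pi n\big)$, and since $\cos\gamma_i\le 1$ this forces $\cos\beta_i\ge\cos\frac\pi n$, that is $\beta_i\le\frac\pi n$, for each $i$. Combined with $\sum_i\beta_i=\pi=n\cdot\frac\pi n$ this is possible only when $\beta_i=\frac\pi n$ for all $i$, i.e. the vertices are equally spaced and $P$ is the regular $n$-gon. This yields the sharp bound $w(P)\le r\big(1+\cos\frac\pi n\big)$ with equality only for the regular $n$-gon. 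That the regular $n$-gon actually attains the bound is where the oddness of $n$ enters: for odd $n$ the point antipodal to each edge midpoint is exactly an opposite vertex, so $\gamma_i=0$ and $w_i=r\big(1+\cos\frac\pi n\big)$ for every $i$, whence $w(P)=r\big(1+\cos\frac\pi n\big)$. With $r=\frac12$ this gives $\Lambda_n({\mathcal M}^2)=\lambda_n(D)=\frac12+\frac12\cos\frac\pi n$.

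The hard part will be the clean justification that the minimal width is realised perpendicular to an edge, together with the correct identification of the opposite supporting vertex in the formula for $w_i$. A second point deserving emphasis is exactly why odd $n$ is needed: for even $n$ the antipode of an edge midpoint is again an edge midpoint rather than a vertex, so $\gamma_i=\frac\pi n\ne 0$, the regular polygon falls strictly below $r(1+\cos\frac\pi n)$, and that bound is then attained by no inscribed $n$-gon; the extremal polygon need not be regular, so the argument genuinely relies on $n$ being odd.
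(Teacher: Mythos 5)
Your proposal is correct and follows essentially the same route as the paper: the paper also bounds the width perpendicular to a side by the distance from that side's line to the disk's far parallel supporting line (your $w_i\le r(1+\cos\beta_i)$, i.e.\ $\cos\gamma_i\le 1$), applies the same pigeonhole on central angles to force regularity, and uses that for odd $n$ the antipode of each edge midpoint is a vertex. You merely phrase the argument in the contrapositive and spell out the "easy calculation'' the paper omits.
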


Proposition \ref{inscribed n-gon} is not true for even $n$.
Just immediately for $n=4$ we may be surprised that the square $S$ inscribed in $D$ is not the best approximating inscribed quadrangle.
The reason is that $w(S) = {\frac 1 2}\sqrt 2 \approx 0.707$ and for the inscribed regular triangle $T$ (still it is a degenerated quadrangle) the value $w(T) = 0.75$ is better.
What is more, we have $\lambda_4 (D) \geq {\frac 8 9} \sqrt 3 \approx 0.7698$ (and we conjecture that $\lambda_4 (D) = {\frac 8 9} \sqrt 3 $).
This estimate is realized for the deltoid $v_1v_2v_3v_4$ inscribed in the disk $D$ of the minimal width $1$ centered at $(0,0)$ (later, we always take this $D$), where $v_1 = ({\frac 1 2},0)$, $v_2= (- {\frac 1 6}, {\frac 1 3}\sqrt 2)$, $v_3 = (-{\frac 1 2},0)$ and $v_4 = (- {\frac 1 6}, -{\frac 1 3}\sqrt 2)$, see Figure 1.

{\ }

\begin{center}
\includegraphics[width=7.92cm,height=7.2cm]{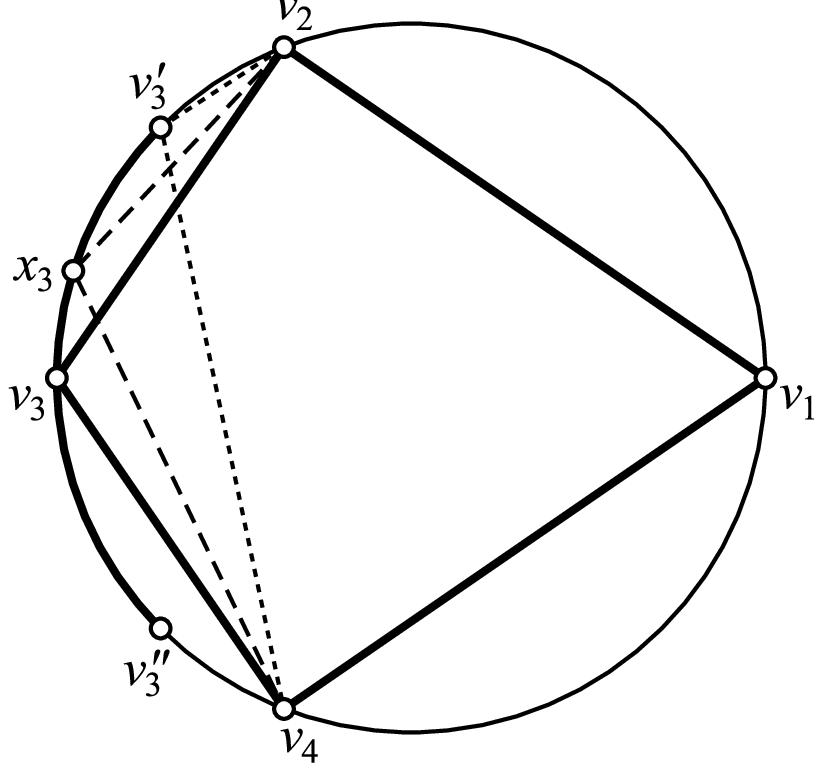}\\ 

\vskip0.2cm
\centerline
{Fig 1. Wide quadrangles inscribed in a disk}
\end{center}

\vskip0.3cm
Also when we change the position of $v_3$ in $D$ not too much, the new quadrangle still has the minimal width ${\frac 8 9} \sqrt 3$; namely the general position $x_3$ of the vertex $v_3$ of the inscribed quadrangle $v_1v_2x_3v_4$ (in Figure 1 with two sides marked by broken lines) must be on the circle bounding $D$ so that the distances from $v_1$ to the straight lines carrying $v_2x_3$ and $x_3v_4$ remain at least ${\frac 8 9} \sqrt 3$, i.e., that each of the angles $\angle v_1v_2x_3$ and $\angle v_1v_4x_3$ must be at least $\angle v_4v_1v_2$.
The extreme positions $v'_3$ and $v''_3$ of $x_3$ give two quadrangles $v_1v_2v_3'v_4$ (in Figure 1 with two sides marked by pointed lines) and $v_1v_2v_3''v_4$, each of which is an inscribed trapezium with three sides of length $\sqrt {2/3} \approx 0.8164$.

A widest found hexagon in the disk $D$ has vertices $v_i = ({\frac 1 2}\cos \alpha_i, {\frac 1 2}\sin \alpha_i)$ for $i= 1, \dots ,6$, where 
$\alpha_1 = {\rm arc cos} {\frac {\sqrt {145} -5} {20}} \approx (69.385)^\circ$, 
$\alpha_2 = 2\alpha_1 \approx (139.77)^\circ$, 
$\alpha_3 = 180^\circ$, 
$\alpha_4 = 360^\circ - \alpha_2 \approx (221.229)^\circ$, 
$\alpha_5 = 360^\circ - \alpha_1 \approx (290.615)^\circ$ 
and $\alpha_6 = 360^\circ$.
Applying the fact that the minimal width of any polygon is realized for the direction perpendicular to a side of it and using the formula for the distance between a point and a  straight line, by an easy but tedious evaluation we find that the minimal width of the hexagagon under study is $({\frac 1 2} + {\frac 1 2} \cos \alpha_1 -\cos^2\alpha_1)\sqrt {2 +2 \cos \alpha_1} \approx 0.90786$. 
The author expects that it gives the value of $\lambda_6 (D) = \Lambda_6 ({\mathcal M}^2)$.  
By the way, the minimal width of this hexagon does not change when we permit $\alpha_3$ to vary between 
$360^\circ - 3 \alpha_1 \approx (151.845)^\circ$ and $3 \alpha_1  \approx (208.155)^\circ$.  
For the first extreme position of $\alpha_3$ the side $v_2v_3$ is parallel to $v_5v_6$, and for the second the side $v_3v_4$ is parallel to $v_6v_1$.

Finding even-gons of large minimal width with more vertices becomes more complicated. 
The author tried to find such large octagons inscribed in $D$ with a computer approach. 
This computation shows that
$\lambda_8 (D) = \Lambda_8 ({\mathcal M}^2)$ is at least $0.95143$, namely by taking the inscribed octagon with vertices $v_i = ({\frac 1 2}\cos \alpha_i, {\frac 1 2}\sin \alpha_i)$ for $i=1, \dots , 8$, where $\alpha_1, \dots , \alpha_8$ are $(50.432)^\circ$, $(100.864)^\circ$, $(151.296)^\circ$, $180^\circ$, $(208.704)^\circ$, $(259,136)^\circ$, $(309.568)^\circ$, $360^\circ$.

Recapitulating, we obtain the following proposition.

\begin {pro} \label{Lambda_4}  
We have
$\Lambda_4({\mathcal M}^2) = \lambda_4(D) \geq {\frac 8 9} \sqrt 3 \approx 0.7698$, 
$\Lambda_6 ({\mathcal M}^2) = \lambda_6 (D) \geq 0.90786$, 
$\Lambda_8 ({\mathcal M}^2) = \lambda_8 (D) \geq 0.95143$.
\end{pro}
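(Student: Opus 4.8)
The plan is to establish the three equalities first and then the three lower bounds. The equalities $\Lambda_n({\mathcal M}^2)=\lambda_n(D)$ for $n\in\{4,6,8\}$ are immediate from part (2) of the Theorem of \cite{[GL]} recalled above: that result guarantees that the infimum defining $\Lambda_n({\mathcal M}^d)$ is attained at a ball, so in the plane the extremal body is the disk $D$ of unit minimal width, whence $\Lambda_n({\mathcal M}^2)=\lambda_n(D)$. Since $w(D)=1$, every polygon $P\subset D$ with at most $n$ vertices satisfies $\lambda_n(D)\ge w(P)$, and it remains only to exhibit, for each $n$, one inscribed polygon whose minimal width meets the claimed value.

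The single computational tool I would use is the elementary fact, already invoked above, that the minimal width of a convex polygon is realized in a direction perpendicular to one of its edges. Concretely, if $P$ has edges $e_1,\dots,e_m$ lying on the lines $\ell_1,\dots,\ell_m$, then $w(P)$ equals the minimum over $i$ of the distance from $\ell_i$ to the vertex of $P$ farthest from $\ell_i$. Hence each minimal width reduces to finitely many point--line distances, computed by the usual distance formula, and verifying a lower bound for $w(P)$ amounts to checking, edge by edge, that every such distance is at least the target value.

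For $n=4$ I would take the kite $v_1v_2v_3v_4$ with the four explicit vertices listed before the statement; a direct check shows each $v_i$ lies on the circle of radius $\frac12$ bounding $D$. The symmetry $v_2\leftrightarrow v_4$ in the horizontal axis leaves only two edge--directions to test: perpendicular to $v_1v_2$ (equivalently $v_1v_4$) the width is the distance from $v_4$ (respectively $v_2$) to the line $v_1v_2$, which the distance formula evaluates to the claimed bound (about $0.7698$); perpendicular to $v_2v_3$ (equivalently $v_3v_4$) the width is the distance from $v_1$ to the line $v_2v_3$, namely $\sqrt{2/3}\approx 0.8165$. The smaller of the two is the bound, so $\lambda_4(D)$ meets it. For $n=6$ and $n=8$ I would proceed identically with the symmetric hexagon and octagon whose vertex angles $\alpha_i$ are listed above: each vertex lies on the bounding circle by construction, and the edge--perpendicular criterion together with the distance formula yields, for the hexagon, the closed form $\big(\frac12+\frac12\cos\alpha_1-\cos^2\alpha_1\big)\sqrt{2+2\cos\alpha_1}\approx 0.90786$ and, for the octagon, the value at least $0.95143$.

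The routine part is the reduction and the point--line distance evaluations; the real content is in justifying that the displayed polygons are the correct ones, that is, that the minimum over edge--directions is \emph{not} smaller than the stated value. For the kite this is transparent from the two--direction comparison. For the hexagon the specific choice $\alpha_1={\rm arc cos}\,\frac{\sqrt{145}-5}{20}$ is exactly the one equalizing the widths across the competing edge--directions, so that no direction drops below $0.90786$; confirming this balancing relation is an exact but tedious algebraic identity, and I expect it to be the main obstacle. For the octagon no such closed form is available and the bound $0.95143$ can only be certified numerically from the computer search reported above, which is why the proposition asserts an inequality rather than an equality in that case. As all three statements are lower bounds, exhibiting one admissible inscribed polygon per case and checking its minimal width against the target completes the argument.
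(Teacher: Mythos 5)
Your proposal is correct and takes essentially the same route as the paper, which obtains the proposition exactly as you do: the equalities $\Lambda_n({\mathcal M}^2)=\lambda_n(D)$ from part (2) of the Theorem of \cite{[GL]} (extremality of the ball in ${\mathcal M}^d$), and the lower bounds by exhibiting the same kite, hexagon and octagon and evaluating their minimal widths through the edge-perpendicular criterion and point--line distances. One caveat you inherit from the paper's own text: the kite's width evaluates to $\frac{4}{9}\sqrt{3}\approx 0.7698$, so the printed closed form $\frac{8}{9}\sqrt{3}$ ($\approx 1.54>1=w(D)$, impossible for an inscribed polygon) appears to be a typo for $\frac{4}{9}\sqrt{3}$.
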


An open question remains about analogical estimates of $\Lambda_n({\mathcal M}^d)$ better than those for $\Lambda_n({\mathcal K}^d)$ obtained in Theorem \ref{theorem-main} and Corollaries \ref{Lambda2d} and \ref{Lambda6}. 
In particular, one can look for for a better estimate for ${\mathcal M}^d$ than the one in the first statement of Corollary \ref{Lambda2d} (at least for $d=3$) in analogy to the $2$-dimensional example of the deltoid (see the paragraph just after Proposition \ref{inscribed n-gon}) which implied $\lambda_4(D) \geq {\frac 8 9}\sqrt 3$. 

Finally, we consider the dual problem of estimating from above the diameter of the polytopes containing a convex body of a given diameter.
For $d=2$ it is mentioned in \cite{[BMP]} on p. 452.
For this aim let us introduce the number

$$\delta_n (C) = \inf \Big\{ {\frac {{\rm diam}(P)} {{\rm diam}(C)}}; \ {\rm where} \ P \supset C \ {\rm is \ a \ polytope \ with \ at \ most} \ n \ {\rm facets} \Big\}.$$ 

\noindent
Here $C \in {\mathcal K}^d$, and the symbol diam stands for the diameter.   
For any family ${\mathcal B} \subset {\mathcal K}^d$ we put

$$\Delta_n({\mathcal B}) = \sup \{ \delta_n (C) ; \ C \in {\mathcal B} \}.$$

\begin{thm} \label{Delta_n} 
For any integers $d \geq 2$ and $n \geq 2d$ we have

$$\Delta_n ({\mathcal K}^d) \leq {\frac 1 {\cos a_{\lfloor {\frac n 2}\rfloor}^d}}.$$
\end{thm}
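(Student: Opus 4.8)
The plan is to circumscribe $C$ by an intersection of $\lfloor \frac n2\rfloor$ strips, one perpendicular to each of the lines realizing $a_{\lfloor \frac n2\rfloor}^d$, and then to bound the diameter of the resulting polytope directly, by projecting the relevant segment onto the control line nearest to its direction. This is the exact mirror of the inscribed-polytope idea behind Theorem \ref{theorem-main}.

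Set $k = \lfloor \frac n2\rfloor$. First I would invoke the definition of $a_k^d$: there exist $k$ lines through the origin, with unit direction vectors $m_1,\dots,m_k$, such that every line through the origin makes an angle at most $a_k^d$ with at least one of them. For the given body $C$, for each $i$ let $S_i$ be the narrowest strip of direction $m_i$ containing $C$; its width equals the width of $C$ in direction $m_i$, and hence is at most $\diam(C)$, since the diameter of a convex body is the maximum of its widths over all directions. Put $P = \bigcap_{i=1}^{k} S_i$. Then $P \supset C$, and as each strip is bounded by two hyperplanes, $P$ is a polytope with at most $2k \le n$ facets.

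The key step is the estimate of $\diam(P)$. Take any two points $p, q \in P$ and let $u$ be the unit vector along $p - q$. By the covering property there is an index $i$ with the angle $\theta$ between $u$ and $\pm m_i$ at most $a_k^d$; orient $m_i$ so that $\langle u, m_i\rangle = \cos\theta \ge 0$. Since $p$ and $q$ both lie in $S_i$, their orthogonal projections onto the line of direction $m_i$ differ by at most the width of $S_i$, whence $|pq|\cos\theta = |\langle p-q, m_i\rangle| \le w(S_i) \le \diam(C)$. Because $\cos\theta \ge \cos a_k^d > 0$, this gives $|pq| \le \diam(C)/\cos a_k^d$. As $p$ and $q$ were arbitrary, $P$ is bounded and $\diam(P) \le \diam(C)/\cos a_k^d$. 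Therefore $\delta_n(C) \le \diam(P)/\diam(C) \le 1/\cos a_k^d$ for every $C \in {\mathcal K}^d$, and taking the supremum over $C$ yields the asserted bound on $\Delta_n({\mathcal K}^d)$.

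I do not expect a serious obstacle. The only point needing care — and the reason the bound comes out as stated — is that one must not try to control the width of $P$ in a general direction, because the strips are unbounded transversally; instead one bounds the distance between two arbitrary points of $P$ by projecting the segment $pq$ onto the nearest control direction $m_i$, and the angular deficit produces exactly the factor $1/\cos a_k^d$. One should also record that $\cos a_k^d > 0$, which holds since $k = \lfloor \frac n2\rfloor \ge d$ and $d$ lines already cover the unit sphere by antipodal caps of angular radius $\arccos(1/\sqrt d) < \frac\pi2$.
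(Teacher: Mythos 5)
Your proof is correct, but it follows a genuinely different route from the paper's. Both arguments start identically: take the $\lfloor n/2\rfloor$ control lines realizing $a_{\lfloor n/2\rfloor}^d$ and circumscribe $C$ by the intersection of strips orthogonal to them. The paper, however, first encloses $C$ in a body $W$ of constant width $\diam(C)$ (so that every circumscribed strip has width exactly $\diam(C)$), forms the intersection $I(W)$ of the strips about $W$, and then passes to the central symmetrization $I(W)^*$, which is circumscribed about a ball; the diameter bound is obtained as twice a radial estimate $|ob|\leq \frac{1}{2\cos a_{\lfloor n/2\rfloor}^d}$ for boundary points $b$ of $I(W)^*$, using the right triangle formed by the center, the tangency point of a facet with the ball, and the exit point along $ob$. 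You instead take the minimal strips about $C$ itself (of width at most $\diam(C)$, not necessarily equal to it) and bound $|pq|$ for arbitrary $p,q$ in the intersection by projecting the segment onto the control direction nearest to $p-q$. Your version is shorter and avoids two classical ingredients the paper relies on (the completion of $C$ to a constant-width body, and the fact that central symmetrization preserves the diameter), at the cost of producing a possibly less symmetric circumscribed polytope; the bound obtained is the same, and in both cases the factor $1/\cos a_{\lfloor n/2\rfloor}^d$ arises from the same angular-deficit computation. Your closing observation that $a_{\lfloor n/2\rfloor}^d\leq a_d^d<\pi/2$ is exactly the point the paper also makes to justify that the relevant projection is nontrivial. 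One small presentational matter: you call $P$ a polytope before you have shown it is bounded; since boundedness follows only from your diameter estimate, it would be cleaner to call $P$ a polyhedron first and note that the estimate makes it a polytope with at most $2\lfloor n/2\rfloor\leq n$ facets.
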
  

\begin{cor} \label{Delta_6}
We have $\Delta_6 ({\mathcal K}^3) \leq 1.773$, $\Delta_8 ({\mathcal K}^3) \leq 1.529$, $\Delta_{10} ({\mathcal K}^3) \leq 1.438$, $\Delta_{12} ({\mathcal K}^3) \leq 1.257$, $\Delta_{14} ({\mathcal K}^3) \leq 1.239$ and $\Delta_{16} ({\mathcal K}^3) \leq 1.199$. 
Moreover, $\Delta_{2d} ({\mathcal K}^d) \leq \sqrt d$ for $d\geq 2$ and $\Delta_{2d+2} ({\mathcal K}^d) \leq \sqrt {d - {\frac 2 3}}$ for $d\geq 3$.
\end{cor}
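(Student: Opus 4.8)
The plan is to read Corollary \ref{Delta_6} off directly from Theorem \ref{Delta_n} by specializing the index $k=\lfloor n/2\rfloor$ and then reusing the very same estimates of the angular radii $a_k^d$ that produced the lower bounds for $\Lambda_n({\mathcal K}^d)$ in Corollaries \ref{Lambda2d} and \ref{Lambda6}. The point is that Theorem \ref{Delta_n} gives $\Delta_n({\mathcal K}^d)\leq 1/\cos a_{\lfloor n/2\rfloor}^d$, whereas Theorem \ref{theorem-main} gives $\Lambda_n({\mathcal K}^d)\geq \cos a_{\lfloor n/2\rfloor}^d$; hence each upper bound asserted here is nothing but the reciprocal of the corresponding lower bound there, both drawing on one common table of covering radii $a_k^d$ recalled in Section 3. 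The only logical step is that from $\cos a_k^d\geq c$ one gets $1/\cos a_k^d\leq 1/c$, so that $\Delta_n({\mathcal K}^d)\leq 1/\cos a_k^d\leq 1/c$.

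First I would dispose of the six numerical inequalities for $d=3$. Setting $d=3$ and $n=6,8,10,12,14,16$ gives $k=\lfloor n/2\rfloor=3,4,5,6,7,8$, so Theorem \ref{Delta_n} yields $\Delta_n({\mathcal K}^3)\leq 1/\cos a_k^3$ for these $k$. The proof of Corollary \ref{Lambda6} already furnishes the estimates $\cos a_3^3\geq 1/\sqrt 3$, $\cos a_4^3\geq 0.654$, $\cos a_5^3\geq 0.695$, $\cos a_6^3\geq \frac{1}{\sqrt 3}\cot\frac{\pi}{5}$, $\cos a_7^3\geq 0.806$ and $\cos a_8^3\geq 0.833$. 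Inverting these (using the unrounded values, with the decimals above being convenient roundings) and rounding upward produces the claimed bounds $\sqrt 3$ for $n=6$, then $1.529$, $1.438$, $1.257$, $1.239$ and $1.199$ respectively.

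Next I would treat the two general inequalities. Taking $n=2d$ (so $k=d$) and $n=2d+2$ (so $k=d+1$) in Theorem \ref{Delta_n} gives $\Delta_{2d}({\mathcal K}^d)\leq 1/\cos a_d^d$ and $\Delta_{2d+2}({\mathcal K}^d)\leq 1/\cos a_{d+1}^d$. The estimates $\cos a_d^d\geq 1/\sqrt d$ and $\cos a_{d+1}^d\geq \sqrt{3/(3d-2)}$ that underlie Corollary \ref{Lambda2d} then yield $\Delta_{2d}({\mathcal K}^d)\leq\sqrt d$ for $d\geq 2$, and for $d\geq 3$
\[
\Delta_{2d+2}({\mathcal K}^d)\leq \frac{1}{\sqrt{3/(3d-2)}}=\sqrt{\frac{3d-2}{3}}=\sqrt{d-\frac 2 3}.
\]

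There is essentially no genuine obstacle here: the entire substantive content sits in the sphere-covering estimates of $a_k^d$, which have already been assembled for the dual Corollaries \ref{Lambda2d} and \ref{Lambda6}. The only points demanding care are the correct reversal of inequalities upon inverting a lower bound on $\cos a_k^d$, and the single algebraic simplification $1/\sqrt{3/(3d-2)}=\sqrt{d-2/3}$ in the last display; everything else is bookkeeping of reciprocals.
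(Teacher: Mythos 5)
Your proposal is correct and is essentially identical to the paper's own (very brief) argument: the paper also derives this corollary by plugging the estimates of $a_k^3$ from Section 3 and the bounds $a_d^d \leq \arccos\frac{1}{\sqrt d}$, $a_{d+1}^d \leq \arccos\sqrt{\frac{3}{3d-2}}$ of Lemma \ref{a_k^2} into Theorem \ref{Delta_n}, the only content being the inversion $1/\cos a_k^d$. (Any sub-$0.002$ discrepancies in the third decimal place are artifacts of the paper's rounding of the cited values of $a_k^3$, not of your method.)
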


\begin{cor} \label{polygon-containing}
For every convex body $C\subset E^2$ and every integer $n\geq 4$ there exists a polygon $P$ with at most $2{\lfloor {\frac n 2}\rfloor}$ sides (so with at most $2{\lfloor {\frac n 2}\rfloor}$ vertices) containing $C$ whose diameter is at most $1/\cos {\frac \pi {2 \lfloor {\frac n 2}\rfloor}}$.
\end{cor}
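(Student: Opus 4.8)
The plan is to read Corollary \ref{polygon-containing} off as the planar instance of Theorem \ref{Delta_n}, while tracking two extra pieces of information that the general theorem does not record explicitly: the exact value of $a_k^2$ and the exact number of sides produced by the construction. First I would substitute $d=2$ and $k=\lfloor \frac n2\rfloor$ into Theorem \ref{Delta_n}, which reduces everything to evaluating $a_{\lfloor n/2\rfloor}^2$. By Lemma \ref{a_k^2} (the optimal cover of the unit circle by $k$ antipodal pairs of arcs is obtained by spacing the $2k$ cap-centers evenly, so that each of the $2k$ gaps of length $\frac{\pi}{k}$ is split between two neighbouring caps) we have $a_k^2=\frac{\pi}{2k}$. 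Hence $\frac{1}{\cos a_{\lfloor n/2\rfloor}^2}=\frac{1}{\cos\frac{\pi}{2\lfloor n/2\rfloor}}$, which is exactly the diameter bound claimed.

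Next I would revisit the construction behind Theorem \ref{Delta_n} in order to count sides. In that construction one fixes the $k=\lfloor \frac n2\rfloor$ lines through the origin that realize the cover of angular radius $a_k^2$, and for each corresponding direction $u_i$ one encloses $C$ in the strip $S_i$ bounded by the two supporting lines of $C$ orthogonal to $u_i$; the approximating polygon is $P=\bigcap_{i=1}^{k}S_i$. Each of the $k$ strips is bounded by two parallel lines, so $P$ is the intersection of at most $2k$ halfplanes and therefore has at most $2k=2\lfloor \frac n2\rfloor$ sides. Since for a planar polygon the number of vertices equals the number of sides, the same bound holds for the vertices, giving both counts in the statement. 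The diameter estimate is inherited verbatim: for $x,y\in P$ the direction of $x-y$ makes an angle at most $a_k^2$ with some $u_i$, and projecting onto $u_i$ (along which $P\subset S_i$ has extent at most $\diam(C)$) yields $|x-y|\le \diam(C)/\cos a_k^2$.

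The genuine content here is purely bookkeeping, so the main obstacle is not a difficulty but a point deserving care: the parity gap between the ``$n$ facets'' allowed in Theorem \ref{Delta_n} and the ``$2\lfloor \frac n2\rfloor$ sides'' asserted here. Because the supporting lines are grouped into antipodal pairs, the facets are forced to come two at a time, so for odd $n$ the construction uses only $n-1$ of the permitted $n$ sides. I would also verify that $P$ is genuinely a bounded polygon: since $n\ge 4$ gives $k=\lfloor \frac n2\rfloor\ge 2$ distinct directions, the intersection of the corresponding strips is bounded (a parallelogram when $k=2$, and a $2k$-gon in general), so $P\in{\mathcal K}^2$ and its diameter is finite. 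No step poses a real difficulty beyond confirming these counts and the nondegeneracy of $P$.
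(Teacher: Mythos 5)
Your proposal is correct and follows the same route as the paper, whose proof of this corollary is exactly the deduction you describe: apply Theorem \ref{Delta_n} with $d=2$ and evaluate $a_{\lfloor n/2\rfloor}^2=\frac{\pi}{2\lfloor n/2\rfloor}$ via the first statement of Lemma \ref{a_k^2}; the count of $2\lfloor \frac n2\rfloor$ sides is likewise implicit in the paper's construction, where the containing polytope is the intersection of $\lfloor \frac n2\rfloor$ strips. The one place you diverge is in re-justifying the diameter bound: you intersect the supporting strips of $C$ itself (each of width at most $\diam(C)$) and bound $|x-y|$ by projecting onto the nearest direction $u_i$, whereas the paper's proof of Theorem \ref{Delta_n} first completes $C$ to a constant-width body $W$, intersects strips of width exactly $\diam(C)$ around $W$, and then passes to the central symmetrization $I(W)^*$ circumscribed about a ball to estimate the diameter. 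Your direct projection argument is a legitimate streamlining for this purpose and arguably more elementary, since for the diameter estimate one only needs that each strip has width at most $\diam(C)$, not the constant-width completion; the paper's detour through $W$ buys a cleaner geometric picture (a polytope circumscribed about a ball) but is not logically necessary here. Your remarks on the parity gap and on the boundedness of $P$ for $k\ge 2$ are both correct.
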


If we take only even $n$, this corollary reads: {\it for every convex body $C\subset E^2$ and every even $n\geq 4$ there exists a polygon $P$ with $n$ sides (so with at most $n$ vertices) containing $C$ whose diameter is at most $1/\cos {\frac \pi n}$.}
It is easy to check that the diameter of the circumscribed regular $n$-gon about $B$, for $n$ even, is $1/\cos {\frac \pi n}$.
So for the regular even-gon circumscribing $B$ the value from Corollary \ref{polygon-containing} is attained. 
We conjecture that this estimate $1/\cos {\frac \pi n}$ cannot be improved for the disk $B$ in the part of~$C$. 
Let us add that for $n=3$ this is true by \cite{[G]}.
The author believes that also for every odd $n$ the best (i.e., of the smallest diameter) convex $n$-gon circumscribing the disk $B \subset E^2$ is the regular (for which the ratio of the diameters equals $\sin {\frac \pi n} / \cos {\frac \pi {2n}}$).


\section{Auxiliary results on diametral chords and reduced bodies}

The longest chords of a convex body $C \subset E^d$ in a given direction are called \emph{ diametral chords of $C$} in that direction. 
By compactness arguments there is at least one diametral chord of $C$ in arbitrary direction.
By Part 33 of \cite{[BF]} or by Theorem 12.18 of \cite{[V]} for any convex body $C \subset E^d$, the minimum of lengths of diametral chords of $C$ equals $w(C)$, which implies the following claim.

\begin{cla} \label{diametral-chord} 
Every diametral chord of any convex body $C$ has length at least $w(C)$. 
\end{cla}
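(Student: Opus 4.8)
The plan is to read the claim off directly from the fact recalled just above, that for any convex body $C \subset E^d$ one has $\min_m \ell(m) = w(C)$, where $\ell(m)$ denotes the common length of the diametral chords of $C$ in the direction $m$ (all longest chords in a fixed direction share this one length). First I would fix an arbitrary diametral chord $K$ of $C$; by definition $K$ is a longest chord in some direction $m$, so its length equals $\ell(m)$. The deduction is then immediate: $\ell(m) \geq \min_{m'}\ell(m') = w(C)$, because a single member of a family of reals is never below the minimum of that family. Hence $K$ has length at least $w(C)$, and since $K$ was arbitrary the claim follows. There is really no obstacle here; the only point to keep straight is the reduction of the statement about ``every diametral chord'' to the statement about the values $\ell(m)$.

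Since this claim is invoked repeatedly later, I would also like a self-contained argument for the inequality $\ell(m) \geq w(C)$ that avoids the black-box citation. For this, take a longest chord $[p,q]$ in a unit direction $m$ and project $C$ orthogonally onto the hyperplane $m^\perp$, writing each fiber of $C$ as a segment whose length, as a function of the base point in the projection, is concave (it is the difference of a concave upper boundary and a convex lower boundary) and attains its maximum $\ell(m)$ at the base point $y^\ast$ of $[p,q]$. At that maximum the upper and lower boundary pieces of $C$ have parallel supporting hyperplanes at $q$ and at $p$ respectively, so $C$ is trapped in the strip between two parallel hyperplanes through $p$ and $q$; if $\widehat u$ is the common unit normal, the width of this strip equals $|\langle q-p, \widehat u\rangle| \leq |q-p| = \ell(m)$. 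As $w(C)$ does not exceed the width of $C$ in the direction $\widehat u$, which in turn does not exceed the width of the strip, we conclude $w(C) \leq \ell(m)$.

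The step I expect to need the most care is the existence of \emph{parallel} supporting hyperplanes at the two endpoints of a longest chord. In the smooth case this is exactly the condition that the gradients of the upper and lower boundary graphs coincide at $y^\ast$, which holds because $y^\ast$ maximizes their difference; in the general non-smooth case I would replace gradients by matching super- and subgradients of the two boundary functions, or argue by approximating $C$ by smooth bodies and passing to the limit. This is routine, however, and for the purposes of the paper the one-line deduction from the cited theorem is entirely sufficient.
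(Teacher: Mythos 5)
Your first paragraph is exactly what the paper does: Claim \ref{diametral-chord} is presented as an immediate consequence of the cited fact (Part 33 of \cite{[BF]} or Theorem 12.18 of \cite{[V]}) that the minimum of the lengths of the diametral chords equals $w(C)$, and no further argument is given. So your one-line deduction is correct and matches the paper's treatment.

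Your additional self-contained argument for $\ell(m)\geq w(C)$ goes beyond the paper and is essentially sound: the concavity of the fiber-length function, the existence of parallel supporting hyperplanes at the endpoints of a longest chord, and the Cauchy--Schwarz step $|\langle q-p,\widehat u\rangle|\leq |q-p|$ all work. The one point you flag as delicate is indeed the only real one, and there is a second, smaller gap you do not mention: the maximizer $y^\ast$ of the fiber-length function may lie on the boundary of the projection of $C$ onto $m^\perp$ (e.g.\ a triangle with $m$ parallel to a side), where super- and subdifferentials of the boundary functions need not exist or need not satisfy the sum rule; in that case one should argue directly with supporting hyperplanes of $C$ at $p$ and $q$ (a longest chord in direction $m$ admits parallel supporting hyperplanes at its endpoints by a standard translation argument: otherwise a small translate of the chord inside $C$ in direction $m$ could be extended), or use your smooth-approximation route. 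For the purposes of this paper the citation is entirely sufficient, as you say.
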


\begin{lem} \label{two-chords} 
If two diametral chords of a convex body $C \subset E^2$ are not parallel, then they intersect.
\end{lem}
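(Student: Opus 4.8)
The plan is to reduce the statement to the classical fact that two affine diameters of a convex body meet, via the characterization of a diametral chord in terms of parallel supporting lines. Write $A=[a_1,a_2]$ for the diametral chord in direction $u$ and $B=[b_1,b_2]$ for the one in direction $v$, with $u\neq\pm v$.

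First I would establish the characterization: a chord $[p,q]\parallel u$ is a diametral chord in direction $u$ if and only if there are supporting lines of $C$ at $p$ and at $q$ that are parallel; equivalently, $p$ and $q$ are the two extreme points of $C$ in a common normal direction, so that $C$ lies in the closed strip bounded by these two supporting lines. To prove the nontrivial direction, fix $n=u^{\perp}$, slice $C$ by the lines $\{\langle x,n\rangle=t\}$, and let $\lambda(t)$ be the length of the chord in direction $u$ at level $t$. Writing $\lambda=\phi-\psi$ with $\phi(t)=\max\{\langle x,u\rangle:\langle x,n\rangle=t\}$ concave and $\psi(t)=\min\{\langle x,u\rangle:\langle x,n\rangle=t\}$ convex, the function $\lambda$ is concave, so at the maximizing level $t^{*}$ one has $0$ in its superdifferential. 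Matching the one-sided derivatives of $\phi$ and $\psi$ at $t^{*}$ produces a common supporting direction at the two endpoints, i.e. parallel supporting lines (the length bound of Claim \ref{diametral-chord} is consistent with this picture).

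Next, applying the characterization to both chords gives normal directions $n_A,n_B$ and two strips $S_A\supseteq C$, $S_B\supseteq C$ whose bounding lines $L_1\ni a_1$, $L_2\ni a_2$ (resp. $M_1\ni b_1$, $M_2\ni b_2$) are parallel. Assuming $n_A\neq\pm n_B$, the intersection $S_A\cap S_B$ is a parallelogram $\Pi\supseteq C$ whose two pairs of opposite sides lie on $L_1,L_2$ and on $M_1,M_2$. Then $a_1,a_2$ lie on one pair of opposite sides of $\Pi$ and $b_1,b_2$ on the other. Passing to an affine image sending $\Pi$ to the unit square, $A$ becomes a segment joining the left and right edges while $B$ joins the bottom and top edges; the first separates the bottom edge from the top edge inside the square, so $B$ must cross it, and the crossing point lies on $A$. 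This gives $A\cap B\neq\emptyset$.

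The main obstacle is the degenerate case $n_A\parallel n_B$, in which $S_A$ and $S_B$ coincide and the parallelogram collapses, so the separation argument no longer applies; this is exactly the situation where two non-parallel diametral chords both run across the same strip. I would handle it by exploiting the freedom in the characterization: the admissible normals at the endpoints of $A$ (resp. $B$) form the intersections of the relevant normal cones, which are arcs rather than single directions unless the endpoints are smooth, so in general one can choose $n_A\not\parallel n_B$ and return to the parallelogram case; when the endpoints are smooth the two faces of the common strip degenerate to single points, forcing $A$ and $B$ to share their endpoints and hence to meet. Making this dichotomy airtight for bodies whose boundary contains flat pieces is the delicate point the argument must resolve.
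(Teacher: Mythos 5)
Your non-degenerate case is correct and takes a genuinely different route from the paper: the paper assumes the chords $ab$ and $cd$ are disjoint, forms the convex quadrangle $abdc$, and observes that if the two interior angles adjacent to one of these sides sum to more than $\pi$, then a nearby parallel chord is strictly longer, so that side is not diametral; since the four angles sum to $2\pi$, both pairs must sum to exactly $\pi$. Your path through the affine-diameter characterization, the two supporting strips, and the forced crossing of two segments joining opposite pairs of sides of the parallelogram $S_A\cap S_B$ is sound whenever $n_A\not\parallel n_B$, and the concavity argument for the characterization is fine.

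The degenerate case $n_A\parallel n_B$ that you flag is, however, a genuine gap, and your proposed dichotomy does not close it: smoothness of the boundary at an endpoint means the normal cone there is a single ray, not that the face $L\cap C$ is a single point (every interior point of an edge of a square is a smooth boundary point whose face is the whole edge), so smooth endpoints do not force $A$ and $B$ to share endpoints. In fact no argument can close the gap, because the lemma as stated fails exactly there: in the rectangle $C=[0,10]\times[0,1]$ the chord from $(0,0)$ to $(1,1)$ is a longest chord in direction $(1,1)$ and the chord from $(5,0)$ to $(5,1)$ is a longest chord in the vertical direction; both are affine diameters with normal $(0,1)$, they are not parallel, and they are disjoint. (The paper's own proof breaks at the corresponding spot: from ``both angle sums equal $\pi$'' it concludes that $abdc$ is a rectangle, whereas this only gives $ac\parallel bd$, a trapezoid whose parallel sides are the \emph{other} pair --- precisely what the rectangle example realizes.) The statement becomes true, and both arguments can be completed, under the extra hypothesis that the diametral chord in each of the two directions is unique, which is all the paper needs since it only invokes the lemma for reduced bodies, where uniqueness is Lemma \ref{direction}. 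With uniqueness your degenerate case finishes cleanly: two disjoint chords spanning the same strip, with $a_1,b_1$ in the face $F_1$ and $a_2,b_2$ in the face $F_2$, bound a trapezoid inside $C$ whose chords parallel to $A$ near $A$ still run from $F_1$ to $F_2$ and hence have the same length, producing a second diametral chord in the direction of $A$ --- a contradiction. You should therefore add the uniqueness hypothesis (or restrict to reduced bodies) and then run this argument in place of the smoothness dichotomy.
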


\begin{proof}
Assume that some two non-parallel diametral chords $ab$ and $cd$ do not intersect, where $a, b, d, c$ are in this order on the boundary of $C$. 
Since $C$ is convex, $abdc$ is a convex quadrangle.
If $\angle acd + \angle cdb > \pi$, then  $cd$ cannot be a longest chord of $C$ in its direction. 
If $\angle cab + \angle abd > \pi$, then $ab$ cannot be a longest chord of $C$ in its direction.
So $abdc$ must be a non-degenerated rectangle.
Hence $ab$ and $cd$ are parallel diametral chords, which contradicts the assumption of our lemma. 
\end{proof}

\begin{lem} \label{continuously}  
Assume that for a convex body $C \subset E^2$ and every direction $\ell$ there is only one diametral chord in direction $\ell$. 
Then the position of the diametral chord of $C$ changes continuously as $\ell$ varies.
\end{lem}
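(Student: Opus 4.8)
\section*{Proof proposal for Lemma \ref{continuously}}

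The plan is to argue by sequential compactness, letting the uniqueness hypothesis pin down the limit. Fix a direction $\ell_0$ and let $s_0=[p_0,q_0]$ be the unique diametral chord of $C$ in direction $\ell_0$. Take any sequence of directions $\ell_k \to \ell_0$ and write $s_k=[p_k,q_k]$ for the (unique) diametral chord in direction $\ell_k$, with $p_k,q_k$ on the boundary of $C$. Since $\bar C$ is compact, so is $\bar C\times\bar C$, and it suffices to show that every convergent subsequence of the pairs $(p_k,q_k)$ has limit $(p_0,q_0)$; granting this, $s_k\to s_0$, which is exactly the asserted continuity of the map $\ell\mapsto s(\ell)$. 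So I would pass to a subsequence with $p_k\to p^*$ and $q_k\to q^*$, set $s^*=[p^*,q^*]$, and aim to prove $s^*=s_0$.

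First I would verify that $s^*$ is a genuine chord of direction $\ell_0$. By Claim \ref{diametral-chord} each $s_k$ has length at least $w(C)>0$, so $|s^*|=\lim_k |p_k-q_k|\ge w(C)>0$; in particular $p^*\neq q^*$, so the unit direction of $s^*$ is the limit of the unit directions of the $s_k$, namely that of $\ell_0$. Thus $s^*$ is a nondegenerate segment contained in $\bar C$ and parallel to $\ell_0$, hence a chord of $C$ in direction $\ell_0$, which immediately gives $|s^*|\le D(\ell_0)$, where $D(\ell)$ denotes the length of the diametral (longest) chord in direction $\ell$. This is where Claim \ref{diametral-chord} is essential: without the positive lower bound on the length, the limiting segment could degenerate and lose its direction.

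The crux is the reverse inequality $|s^*|\ge D(\ell_0)$, which is exactly lower semicontinuity of the length function $D$. I would deduce it from continuity of $D$, obtained cleanly by identifying $D(\ell)$ with the radial function of the difference body $C-C$ at the unit vector $u$ along $\ell$: one has $\lambda u\in C-C$ precisely when $C$ contains a chord of length $\lambda$ in direction $\ell$, so $D(\ell)=\max\{\lambda\ge 0:\lambda u\in C-C\}$. Since $C$ has nonempty interior, $C-C$ is a convex body with the origin in its interior, and the radial function of such a body is continuous (equivalently, its gauge functional is convex and finite, hence continuous, and one takes reciprocals). Therefore $|s^*|=\lim_k |s_k|=\lim_k D(\ell_k)=D(\ell_0)$, so $s^*$ is a chord of direction $\ell_0$ of maximal length, i.e.\ a diametral chord in direction $\ell_0$. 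By the uniqueness hypothesis $s^*=s_0$, which is what remained to be shown.

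The main obstacle is precisely the lower semicontinuity of $D$. A naive perturbation argument — sliding $s_0$ to a nearby chord in direction $\ell_k$ — is delicate when $s_0$ lies along a flat part of the boundary of $C$, because chords through interior points close to $s_0$ in a tilted direction can be strictly shorter, and the genuinely long chord in direction $\ell_k$ may sit far from $s_0$ (as happens near a vertex of a triangle). Routing length-continuity through the radial function of $C-C$ sidesteps this case analysis entirely, and it is the one nontrivial input; the compactness bookkeeping and the nondegeneracy step are routine.
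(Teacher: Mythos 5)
Your proof is correct and follows the same skeleton as the paper's: continuity of the diametral-chord length as a function of direction, combined with sequential compactness and the uniqueness hypothesis to identify the limit chord. The only real difference is that the paper outsources the length-continuity to \cite{[L2]}, whereas you prove it on the spot by identifying the length function with the radial function of the difference body $C-C$ (a convex body with the origin interior, hence continuous radial function) --- a clean, self-contained substitute for that citation.
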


\begin{proof} 
As explained in \cite{[L2]} (see the paragraph just after Lemma 1 there) the length of the diametral chord is a continuous function of its direction. 
So if $\ell_0$ is the limit of directions $\ell$, then taking also into account the assumption of our lemma, we conclude that the limit of the diametral chords of $C$ in these directions $\ell$ is the diametral chord in direction $\ell_0$, which confirms the stated continuity. 
\end{proof}

Recall that a convex body $R$ of Euclidean $d$-space $E^d$ is called \emph{reduced} if for every convex body $Z \subset R$ different from $R$ we have $w (Z) < w (R)$.
Denote by ${\mathcal R}^d$ the family of reduced bodies in $E^d$. 
For a survey of results on reduced bodies see \cite{[LM]}. 
Later we apply reduced bodies in the proof of Proposition \ref{Lambda_3}. 
This application is based on the following claim from \cite{[LM]}.

\begin{cla} \label{width-reduced}  
Every convex body contains a reduced convex body of the same minimal width. 
\end{cla}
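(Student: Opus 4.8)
Fix $C \in {\mathcal K}^d$ and set $w = w(C)$. The plan is to obtain the desired reduced subbody by an extremal argument, choosing inside $C$ a body that is as small as possible while retaining the full minimal width. Concretely, I would consider the family
$$ {\mathcal F} = \{ Z : Z \subseteq C, \ Z \ {\rm is \ a \ convex \ body}, \ w(Z) = w \}, $$
which is non-empty since $C \in {\mathcal F}$, order it by reverse inclusion, apply Zorn's lemma to extract an element $R \in {\mathcal F}$ that is minimal with respect to inclusion, and finally show that every such minimal element is automatically reduced.

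The first key step is to verify the hypothesis of Zorn's lemma: every chain in ${\mathcal F}$ (a subfamily totally ordered by inclusion) admits a lower bound in ${\mathcal F}$. Given a chain $\{Z_\alpha\}$, the natural candidate is the intersection $Z = \bigcap_\alpha Z_\alpha$, which is closed and convex, and non-empty by the finite intersection property for nested compact sets. The heart of the matter is to show $w(Z) = w$. Monotonicity of width under inclusion gives $w(Z) \le w$ at once, so the work is in the reverse inequality, which I would establish direction by direction. For a fixed direction $m$, the orthogonal projections of the $Z_\alpha$ onto the line spanned by $m$ form a nested family of segments, each of length at least $w$. Using compactness one verifies that the projection of $Z$ coincides with the intersection of these projections (the relevant hyperplane slices are nested and compact, so their total intersection is non-empty), and a short interval-length estimate shows this intersection is again a segment of length at least $w$. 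Hence the width of $Z$ is at least $w$ in every direction, so $w(Z) = w$. Since $w > 0$, the body $Z$ has positive width in all directions, is therefore full-dimensional with non-empty interior, and so lies in ${\mathcal F}$ as the required lower bound.

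With the chain condition in hand, Zorn's lemma produces a minimal $R \in {\mathcal F}$, and it remains to check that $R$ is reduced. Let $Z \subseteq R$ be any convex body with $Z \neq R$. By monotonicity, $w(Z) \le w(R) = w$; and if $w(Z) = w$ held, then $Z$ would belong to ${\mathcal F}$ while being properly contained in $R$, contradicting minimality. Thus $w(Z) < w(R)$ for every proper convex subbody, which is exactly the defining property of a reduced body, and by construction $w(R) = w(C)$. I expect the main obstacle to be the limit step in the chain condition, namely confirming that the minimal width survives the intersection of an arbitrary, possibly transfinite, chain: the reduction to one-dimensional projections together with the interval-length estimate is what makes this go through, whereas a direct appeal to continuity of $w(\cdot)$ under Hausdorff convergence would only settle the countable case cleanly.
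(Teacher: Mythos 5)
Your proof is correct. The paper itself does not prove this claim --- it simply imports it from the survey \cite{[LM]} --- so there is no in-paper argument to compare against; your Zorn's lemma argument (intersecting a chain of subbodies of full minimal width and checking, direction by direction via the nested projections and nested compact slices, that the width survives the intersection) is the standard proof of this fact in the reduced-bodies literature, and every step you outline goes through: the only point worth spelling out in the interval-length estimate is that, given indices $\alpha$, $\beta$ nearly realizing the two endpoints of the limiting interval, you use the total ordering of the chain to pass to the smaller of $Z_\alpha$, $Z_\beta$ before invoking the lower bound $w$ on its projection length. A slightly slicker alternative that avoids transfinite chains altogether is to minimize volume over the (Hausdorff-compact) family $\mathcal F$: a volume-minimizer is automatically reduced, since a proper closed convex subbody of a convex body has strictly smaller volume.
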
 

This claim guarantees that $\Lambda_n({\mathcal R}^d) = \Lambda_n({\mathcal K}^d)$, which permits to work with this narrower class of bodies. 
What is more, every reduced planar body $R$ has diameter at most $\sqrt 2 \cdot w (R)$ (see \cite{[L1]}), as opposed to the diameter of an arbitrary convex body which may be arbitrarily large.
An additional advantage of reduced bodies is the property shown in the following lemma, which is later applied in the proof of Proposition~\ref{Lambda_3}.

\begin{lem} \label{direction}   
Let $R \subset E^2$ be a reduced body. 
In every direction there is exactly one diametral chord of $R$. 
\end{lem}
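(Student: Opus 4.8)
The plan is to argue by contradiction, so suppose that for some direction -- which we may take to be horizontal -- the body $R$ has two distinct diametral chords $c_1$ and $c_2$, lying on the horizontal lines $y = t_1$ and $y = t_2$ with $t_1 < t_2$. Write $R = \{(x,y) : y_0 \le y \le y_1,\ a(y) \le x \le b(y)\}$, where $a$ is convex and $b$ is concave; then the length of the horizontal chord at height $y$ equals $b(y) - a(y)$, a concave function of $y$. Both $c_1$ and $c_2$ realize the maximal horizontal chord length $L$, and by Claim \ref{diametral-chord} we have $L \geq w(R)$. First I would use concavity: since $b - a$ attains its maximum $L$ at the two heights $t_1$ and $t_2$, it is identically $L$ on $[t_1, t_2]$; consequently $a$ and $b$ are affine there with a common slope, and $R \cap \{t_1 \le y \le t_2\}$ is a parallelogram $\Pi$ having $c_1, c_2$ as one pair of opposite sides and two parallel slanted sides $e_L$ (left) and $e_R$ (right).

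Next I would identify a nondegenerate flat contact of $R$ with a supporting line. Let $n = (h, -s)$ be a normal to $e_L$, where $(s, h)$ is the common direction of $e_L, e_R$ and $h = t_2 - t_1 > 0$. A short convexity computation with the linear functional $f(x,y) = hx - sy$ shows that $f$ attains its minimum over $R$ exactly along $e_L$: indeed $f$ is constant on $e_L$, while on the left boundary $x = a(y)$ the slope of $a$ is less than $s/h$ below $t_1$ and greater than $s/h$ above $t_2$ by convexity, forcing $f$ to increase as $y$ leaves $[t_1, t_2]$. Hence the line $\ell^- = \{f = \min_R f\}$ is a supporting line of $R$ that contains the whole segment $e_L$, which has positive length because $t_2 > t_1$. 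In particular the width $W(n)$ of $R$ in direction $n$ is attained with $e_L$ as a full flat contact.

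The engine of the contradiction is that, because $e_L$ is a nondegenerate edge on $\ell^-$, the direction $n$ is a strict local minimum of the width function $\theta \mapsto W(\theta)$: rotating the direction by a small angle $\theta$ forces the supporting point on the $\ell^-$-side to jump to an endpoint of $e_L$, so $W$ grows at least linearly, $W(n + \theta) \ge W(n) + c|\theta|$ for some $c > 0$. With this in hand I would split into two cases and in each produce a convex body $Z \subsetneq R$ with $w(Z) = w(R)$, contradicting the definition of a reduced body. If $W(n) > w(R)$, I cut off a thin cap of $R$ by a line parallel to $\ell^-$ pushed inward by a small $\varepsilon$; this lowers only those widths whose direction is near $n$, and all of these exceed $w(R)$ for small $\varepsilon$, so $w(Z) = w(R)$. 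If instead $W(n) = w(R)$, I cut off a small cap near one endpoint of $e_L$ by a suitable line, keeping a subsegment of $e_L$ on $\ell^-$; this preserves the contact with $\ell^-$ and hence keeps the value $W(n) = w(R)$ untouched, while the only widths it diminishes are those in directions $\ne n$ near $n$, which are strictly larger than $w(R)$ by the strict-local-minimum property. Either way $Z \subsetneq R$ has the same minimal width, contradicting reducedness; therefore no direction admits two diametral chords.

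I expect the main obstacle to be the last paragraph: making precise that a sufficiently small cap diminishes only widths in a controlled cone of directions around $n$, and that, thanks to the strict local minimum at $n$ and to keeping part of $e_L$ on $\ell^-$, none of the minimal-width directions is affected. Establishing the flat contact $e_L \subset \ell^-$ and the concavity step are routine; the care lies in the quantitative choice of the cap so that $w(Z)$ does not drop below $w(R)$.
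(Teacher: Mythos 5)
Your first paragraph is correct and coincides with the paper's first step: two diametral chords of $R$ in the same direction force the part of $R$ between them to be a parallelogram, so the boundary of $R$ contains two parallel nondegenerate segments $e_L$ and $e_R$. At this point the paper simply invokes Theorem 4, part (b) of \cite{[L1]}, which says that the boundary of a planar reduced body contains no two parallel segments. You instead try to derive the contradiction from scratch, and that is where the argument breaks.

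The ``engine'' of your proof --- that the normal direction $n$ of the edge $e_L$ is a strict local minimum of the width function --- is not established and is false in general. Rotating the direction does push the support point on the $\ell^-$ side toward an endpoint of $e_L$, but the supporting line on the \emph{opposite} side of the strip moves too, and the width is the sum of both contributions. The one-sided derivatives of $W$ at $n$ equal $\pm\rho$ (with $\rho=\frac{1}{2}|e_L|$) corrected by the offset between the projections, onto the edge direction, of the two contact sets; here the opposite contact set contains $e_R=e_L+(L,0)$ with $L\geq w(R)$, so the offset is $L|s|/\sqrt{s^2+h^2}$, and the two derivatives have the signs you need only when $s^2+h^2>L|s|$. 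For a strongly sheared parallelogram ($h$ small, $s$ comparable to $L$) this inequality fails, $W$ is strictly monotone through $n$, and there is no local minimum; excluding such configurations inside a reduced body is essentially the statement you are trying to prove, so it cannot be assumed. A second, independent gap: cutting a cap by a line parallel to $\ell^-$ (or near an endpoint of $e_L$) does not lower only the widths in directions ``near $n$''. It also shaves the corners of $R$ at the endpoints of $e_L$, which decreases the support function, hence the width, in every direction of the normal cones at those corners --- arcs that may be far from $n$ and on which the minimal width of $R$ could be attained, in which case $w(Z)<w(R)$ and no contradiction with reducedness results. To close the argument you would have to show that no minimal-width direction of $R$ has its contact set meeting $e_L$, which you do not do. The clean repair is the paper's route: after your first paragraph, cite the no-two-parallel-boundary-segments property of reduced bodies from \cite{[L1]}.
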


\begin{proof}
Assume that there are two different diametral chords of $R$ in a direction. 
Then the boundary of $R$ contains the two parallel segments connecting the end-points of these chords. 
But the boundary of $R$ does not contain any two parallel segments, as it follows from Theorem 4, part (b) of \cite{[L1]}.
A contradiction.
\end{proof}


\section{Some estimates of $a^d_k$}

In \cite{[FTG]}, especially see p. 2286 there, a few values of $a_k^3$ (denoted there by $r_{2N}^*$, where $k=N$) are estimated or remembered, in particular from \cite{[F1]}.  
Recall after \cite{[FTG]} that for instance $a_3^3 \leq (54.7356)^\circ$, $a_4^3 \leq (49.1066)^\circ$, $a_{5}^3 \leq (45.9243)^\circ$, $a_{6}^3 \leq (37.3774)^\circ$, $a_{7}^3 \leq (36.2060)^\circ$ and $a_{8}^3 \leq (33.5473)^\circ$.

In the following lemma we present more estimates of the numbers $a^d_k$.

\begin{lem} \label{a_k^2} 
We have $a^2_m = {\frac \pi {2m}}$ for every integer $m \geq 2$, $a_{d}^d \leq {\rm arc cos}{\frac 1 {\sqrt d}}$ for $d\geq 2$, and $a_{d+1}^d \leq {\rm arc cos} \sqrt{\frac 3 {3d-2}}$ for $d\geq 3$. 
\end{lem}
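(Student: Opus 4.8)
The plan is to prove the three assertions one at a time; each reduces to exhibiting an explicit family of lines through the origin and bounding its covering radius, and only the first assertion needs a matching lower bound as well. For $a^2_m = \frac{\pi}{2m}$ I would work on the circle $S^1$ of directions of $E^2$ (of circumference $2\pi$). A single line through the origin covers, within angular distance $a$, two antipodal arcs of $S^1$ of total length $4a$, so $m$ lines cover a set of total length at most $4ma$; since they must cover all of $S^1$, we get $4ma \geq 2\pi$, that is $a^2_m \geq \frac{\pi}{2m}$. For the reverse inequality I take the $m$ equally spaced lines in the directions $\frac{j\pi}{m}$, $j=0,\dots,m-1$; consecutive lines are $\frac{\pi}{m}$ apart, so every direction lies within $\frac{\pi}{2m}$ of one of them, giving $a^2_m \leq \frac{\pi}{2m}$. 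The two bounds yield the equality.

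For $a^d_d \leq \arccos\frac{1}{\sqrt d}$ I would take the $d$ coordinate axes. For a unit vector $u=(u_1,\dots,u_d)$ the angle to the $i$-th axis is $\arccos|u_i|$, and $\sum_i u_i^2=1$ forces $\max_i|u_i| \geq \frac{1}{\sqrt d}$; hence the smallest of these angles is at most $\arccos\frac{1}{\sqrt d}$, which is the required bound on the covering radius.

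The estimate for $a^d_{d+1}$ is the crux, and choosing the configuration is the main obstacle. The natural candidate, the $d+1$ directions of a regular simplex, is not good enough: those directions form a tight frame, and the resulting worst direction is essentially as badly covered as a coordinate vector, so one only recovers $\arccos\frac1{\sqrt d}$. Instead I would use a hybrid family: the $d-2$ coordinate axes $e_1,\dots,e_{d-2}$, spanning a subspace $V$, together with three lines $60^\circ$ apart lying in the complementary coordinate plane $W$. By the case $m=3$ of the first assertion these three lines cover $W$ with radius $a^2_3=30^\circ$, and the family has $d+1$ lines in all. Writing an arbitrary unit vector as $u=p+q$ with $p\in V$, $q\in W$, and putting $s=|p|^2$, averaging over the $d-2$ coordinates gives $\max_i|\langle u,e_i\rangle| \geq \sqrt{s/(d-2)}$, while the planar covering bound gives $\max_a|\langle u,t_a\rangle| \geq \frac{\sqrt3}{2}\sqrt{1-s}$ for the three planar directions $t_a$.

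It then remains to minimize, over $s\in[0,1]$, the resulting lower bound $\max(\sqrt{s/(d-2)},\ \frac{\sqrt3}{2}\sqrt{1-s})$ on the largest inner product of $u$ with a chosen line. The first term increases and the second decreases in $s$, so the minimum occurs at the crossing point $s=\frac{3(d-2)}{3d-2}$, where both terms equal $\sqrt{\frac{3}{3d-2}}$. Hence $\cos a^d_{d+1} \geq \sqrt{\frac{3}{3d-2}}$, that is $a^d_{d+1} \leq \arccos\sqrt{\frac{3}{3d-2}}$. The only points needing care are that the two averaging bounds can be attained simultaneously (so the construction is not being undersold) and that the crossing point really delivers the minimum; both are routine once the configuration above is fixed.
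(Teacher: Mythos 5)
Your proposal is correct and uses essentially the same constructions as the paper: equally spaced lines in the plane, the $d$ coordinate axes, and for the third bound the hybrid family of $d-2$ orthogonal axes together with three coplanar lines $60^\circ$ apart in the complementary plane (the paper writes this as replacing $L_2$ by two lines $L_2'$, $L_2''$ in the $x_1x_2$-plane). The only differences are cosmetic: you supply a measure-theoretic lower bound for $a^2_m$ where the paper calls the equality obvious, and you package the third estimate as minimizing $\max\bigl(\sqrt{s/(d-2)},\ \tfrac{\sqrt3}{2}\sqrt{1-s}\bigr)$ over $s$, whereas the paper runs the equivalent contrapositive computation arriving at the same crossing value $\sqrt{3/(3d-2)}$.
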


\begin{proof} The first statement is obvious.

Let us show the second one. 
Take $d$ orthogonal straight lines $L_1, \dots , L_d$ through the origin (say, the axes of a Cartesian coordinate system). 
Take an arbitrary straight line $L$ through the origin and let $[u_1, \dots , u_d]$ be one of the two unit vectors parallel to $L$.  
Without loss of generality, we may assume that for instance the first axis of the coordinate system is the line from amongst our lines $L_1, \dots , L_d$ for which the angle with $L$ is the smallest.
So the angle between $[1, 0, \dots , 0]$ and $[u_1, \dots , u_d]$ (or $[-u_1, \dots , -u_d]$) is the smallest. 
This and $u_1^2 + \dots + u_d^2 =1$ imply that $u_1^2$ is a largest number from amongst $u_1^2, \dots , u_d^2$.
Hence $u_1^2 \geq {\frac 1 d}$.
Moreover, since the angle between $[u_1, \dots , u_d]$ and $[1, 0, \dots , 0]$ is at most the angle between $[u_1, \dots , u_d]$ and $[-1, 0, \dots , 0]$, we have $u_1 \geq 0$ and thus $u_1 \geq {\frac 1 {\sqrt d}}$.  
So $[u_1, \dots , u_d] \circ [1, 0, \dots , 0]  \geq {\frac 1 {\sqrt d}}$ which gives $a_d^d \leq {\rm arc cos}{\frac 1 {\sqrt d}}$, i.e., the second statement.

In order to show the third statement replace the line $L_2$ by the following two lines through the origin: $L'_2$ containing $({\frac 1 2}, {\frac {\sqrt 3} 2}, 0, \dots , 0)$, and $L''_2$ containing $(-{\frac 1 2}, {\frac{\sqrt 3} 2}, 0, \dots , 0)$. 
Take an arbitrary straight line $L$ through the origin. 
We will show that the angle between $L$ and at least one of the lines $L_1, L'_2, L''_2, L_3, \dots , L_d$ is at most ${\rm arc cos} \sqrt{\frac 3 {3d-2}}$.
If the angle between $L$ and at least one of the lines $L_3, \dots , L_d$ fulfills this condition, the situation is clear.

Since now assume the opposite, namely that the angle between $L$ and every of the lines $L_3, \dots , L_d$ is over ${\rm arc cos} \sqrt{\frac 3 {3d-2}}$.
So for every $i \in \{ 3, \dots , d \}$ we have $|u_i| < \sqrt{\frac 3 {3d-2}}$.
Hence $u_1^2 + u_2^2 > 1 - (d-2){\frac 3 {3d-2}} = {\frac 4 {3d-2}}$.

The projection of the point $(u_1, \dots ,u_d)$ on the two-dimensional plane $Ox_1x_2$ is on a circle $x_1^2 + x_2^2 = b^2$, where $\sqrt{\frac 4 {3d-2}} < b \leq 1$. 
Without loss of generality, we may assume that it is on the shorter piece of the circle ``between" $(b, 0)$ and $({\frac {\sqrt 3} 2}b, {\frac 1 2}b)$.
Then $[u_1, \dots , u_d] \circ [1, 0, \dots , 0] = u_1 \geq {\frac {\sqrt 3} 2}b >  {\frac {\sqrt 3} 2} \sqrt{\frac 4 {3d-2}}   = \sqrt{\frac 3 {3d-2}}$.
Consequently, the angle between $L$ and $L_1$ is at most ${\rm arc cos} \sqrt{\frac 3 {3d-2}}$, which ends the proof.
\end{proof}

\vskip 2cm


\section{Proofs of theorems, propositions and corollaries}

\hskip 0.5cm{\it Proof of Theorem} \ref{theorem-main}.

\medskip
Let $C \subset E^d$ be a convex body.
By the definition of $a^d_k$ for $k = {\lfloor {\frac n 2} \rfloor}$ there exist straight lines $L_1, \dots , L_{\lfloor {\frac n 2} \rfloor}$ through the center of the unit sphere, each of which passes through the centers of a pair from amongst of the $\lfloor {\frac n 2} \rfloor$ pairs of antipodal spherical caps that cover the sphere.
Take a diametral chord $D_i$ of $C$ parallel to $L_i$ for $i=1, \dots, \lfloor {\frac n 2}\rfloor$.
By Claim \ref{diametral-chord} 
the length of every of these diametral chords is at least $w(C)$.
Construct the convex hull $H(D_1, \dots , D_{\lfloor {\frac n 2}\rfloor})$ of the union these $\lfloor {\frac n 2} \rfloor$ diametral chords. 

Of course, $H(D_1, \dots , D_{\lfloor {\frac n 2}\rfloor})$ is a polytope with  
$2 \lfloor {\frac n 2} \rfloor$ or fewer vertices inscribed in $C$ (just observe that possibly some end-points of our diametral chords coincide).

Take an arbitrary straight line $L$ through the origin and the narrowest strip $S(L)$ orthogonal to $L$ which contains $H(D_1, \dots , D_{\lfloor {\frac n 2} \rfloor})$. 
Of course, $S(L)$ contains the diametral chords $D_1, \dots , D_{\lfloor {\frac n 2} \rfloor}$.
By the description of the number $a^d_k$ for $k = {\lfloor {\frac n 2} \rfloor}$, 
there exists a straight line $L_i$, where $i \in \{ 1, \dots, {\lfloor {\frac n 2} \rfloor} \}$, such that the angle between $L_i$ and $L$ is at most  $a_{\lfloor {\frac n 2} \rfloor}^d$.
What is more, the strip $S(L)$ contains the diametral chord $D_i$, and the length of $D_i$ is at least $w(C)$.
Consequently, the width of the strip $S(L)$ is at least $w (C) \cdot \cos  a_{\lfloor {\frac n 2} \rfloor}^d$.
Since this holds true for every straight line $L$ through the origin, we conclude that $\lambda_n (C) \geq \cos a_{\lfloor {\frac n 2} \rfloor}^d$. 
By the arbitrariness of the convex body $C$ we draw the conclusion of our theorem.

\medskip
\bigskip
For every $C$ there is a wide class of the polytopes $H(D_1, \dots , D_{\lfloor {\frac n 2}\rfloor})$ constructed in this proof.
Still we may take miscellaneous families of lines $L_1, \dots , L_{\lfloor {\frac n 2} \rfloor}$ (in particular by rotating a fixed good family of them).
Moreover, diametral chords in a direction may be not are unique.

\bigskip
{\it Corollary \ref{Lambda2d}} results from Theorem \ref{theorem-main} and Lemma \ref{a_k^2}.  

\bigskip
{\it Corollary \ref{Lambda6}} follows from Theorem \ref{theorem-main} and the estimates listed at the beginning of Section 3 for even $n$ between $6$ and $16$. 

\bigskip
{\it Corollary \ref{Lambda_n}} results from Theorem \ref{theorem-main} for $d=2$ and $n\geq 4$, by considering $\lfloor {n/2} \rfloor$ diametral chords of $C \in {\mathcal K}^2$ with successive angles ${\frac 1 {\lfloor {n/2} \rfloor}}\pi$, and by applying the first statement of Lemma \ref {a_k^2} for $n = {\lfloor {n/2} \rfloor}$.

\bigskip
{\it Proof of Proposition} \ref{Lambda_3}.

\medskip
Clearly, it is sufficient to show the first statement of the proposition.

By Claim \ref{width-reduced} there is a reduced body $R \subset C$ with $w(R) = w(C)$. 

For every direction $\delta$ take the diametral chord $ac$ of $R$ in this direction. 
It is unique, which results by Lemma \ref{direction}.  
By Claim \ref{diametral-chord} its length is always at least $w(C)$. 
Also in the perpendicular direction take the unique diametral chord of~$R$.
By Lemma \ref{two-chords} these diametral chords intersect.
Denote by $b$ the point of intersection of them. 
We will show that there exists a direction $\delta_0$ for which $|a_0b_0| = |b_0c_0|$. 

Here $a_0$, $b_0$ and $c_0$ are the specific positions of $a, b, c$ for $\delta_0$.

Take any particular direction $\delta'$ in the role of $\delta$. 
Denote by $a', b', c'$ the positions of $a, b, c$. 
If $|a'b'| = |b'c'|$, we are done. 
Assume that $|a'b'| \not = |b'c'|$.
Let for instance $|a'b'| < |b'c'|$.
We rotate $\delta$ starting from $\delta'$ until we obtain 
again $\delta'$ after rotating by $\pi$. 
All the time the positions of the end-points of $ab$ change continuously (see Lemmas \ref{continuously} and \ref{direction}).
At the end of the rotation process we get $a''c''$, which is nothing else but $c'a'$.
Clearly $|a''b''| > |b''c''|$.
From the continuity we conclude that there exists a direction $\delta_0$ for which $|a_0b_0| = |b_0c_0|$.
\break

\eject

\begin{center}
\includegraphics[width=8.19cm,height=7.92cm]{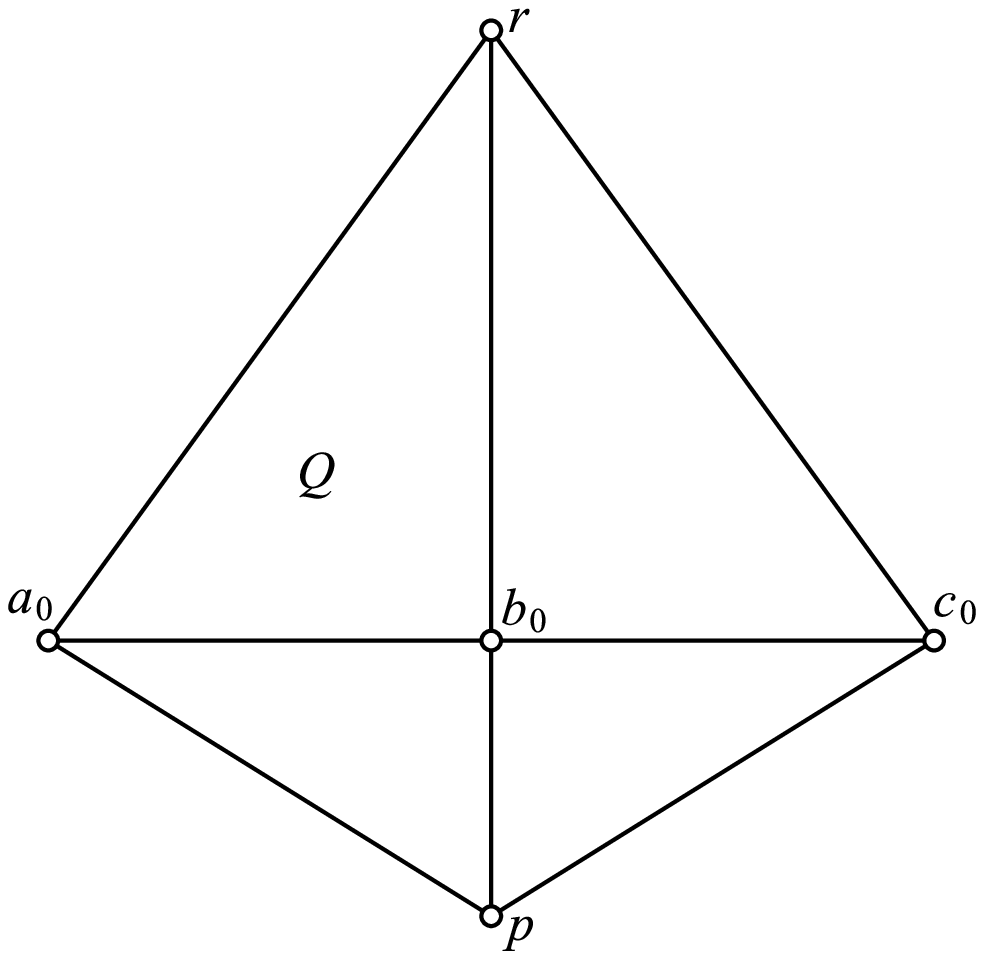}\\ 

\vskip0.2cm
\centerline
{Fig 2. Illustration to the proof of Proposition \ref{Lambda_3}}
\end{center}

\vskip0.3cm

Consider the quadrangle $Q$ which is the convex hull of the diametral chord $a_0c_0$ and the perpendicular diametral chord $pr$ of $R$.
This diametral chord is unique by Lemma \ref{direction}. 
By Claim \ref{diametral-chord} the lengths of both these chords are at least $w(R)$. 
Of course, $pr$ intersects $a_0c_0$ at $b_0$. 
We will show that $Q$ contains a regular triangle of minimal width ${\frac 1 2}(3- \sqrt 3)\cdot w (R)$.
Clearly, later it is sufficient to consider the worst case when $Q$ is as small as possible, that is $|pr| = |a_0c_0| = w(R)$.
Let for instance $|pb_0| \leq |b_0r|$ (see Figure 2). 

For further evaluation put our points in a rectangular coordinate system so that $a_0 = (-{\frac 1 2}\cdot w(R), 0)$, $b_0 =(0,0)$, $c_0 =({\frac 1 2}\cdot w(R),0)$, $p = (0, -k\cdot w(R))$ and $r = (0, (1-k)\cdot w(R))$, where $k$ is a number from the interval $[0,{\frac 1 2}]$.

Case 1, when $k \in [0, 1 - {\frac {\sqrt 3} 2}]$.
Observe that $Q$ contains the regular triangle of side $w(R)$.
Namely the triangle with vertices $a_0(-{\frac 1 2}\cdot w(R), 0)$, $c_0({\frac 1 2}\cdot w(R), 0)$ and $v = (0, {\frac {\sqrt 3} 2}\cdot w(R))$.
The minimal width ${\frac {\sqrt 3} 2}\cdot w(R)$ of this triangle is larger than ${\frac 1 2}(3- \sqrt 3)\cdot w(R)$.

Case 2, when $k \in (1 - {\frac {\sqrt 3} 2}, {\frac 1 2}]$.
Consider the regular triangle $T$ with vertex $r$ and the opposite side parallel to $a_0c_0$ with end-points in the segments $a_0p$
and $pc_0$. 
Of course, $T \subset Q \subset R$. 
Let us show that $T$ has the smallest size if 
the midpoint of $pr$ is $b_0$ and that then its minimal width is ${\frac 1 2}(3- \sqrt 3)\cdot w (R)$.

Consider the line containing $pc_0$, i.e., the line $y=2k(x-{\frac 1 2}\cdot w(R))$.
Take a point $q=(x, 2k(x - {\frac 1 2}\cdot w(R))$ in $pc_0$.
An evaluation shows that $qrq'$, where $q' = (x, -2k(x - {\frac 1 2}\cdot w(R))$, is a regular triangle for $x_0 = {\frac {w(R)} {\sqrt 3 + 2k}}$. 
Of course, the length of the sides of this triangle is $f(k)= {\frac {2w(R)} {\sqrt 3 + 2k}}$. 
This is a decreasing function of $k$. 
Hence it attains the smallest value in $(1 - {\frac {\sqrt 3} 2}, {\frac 1 2}]$ for $k= {\frac 1 2}$, i.e., when the midpoint of $pr$ is at $b_0$. 
Its value is $f({\frac 1 2}) =  {\frac {2w(R)} {\sqrt 3 +1}}$.
So the minimal width of this triangle is ${\frac {\sqrt 3} 2} \cdot {\frac {2w(R)} {\sqrt 3 +1}} = {\frac 1 2}(3-\sqrt 3)\cdot w(R)  = {\frac 1 2}(3-\sqrt 3)\cdot w(C)$.

\bigskip
{\it Proof of Proposition} \ref {inscribed n-gon}.

\medskip
Take any non-regular $n$-gon $Q_n$ inscribed in $D$. 
Clearly, one of its sides must be longer than the sides of the regular $n$-gon $R_n$ inscribed in $D$.
Observe that the width of $Q_n$ perpendicular to this side is smaller than $w (R_n)$.
The reason is that this width is at most the width of the strip containing $Q_n$ between the straight line containing this side and the corresponding parallel supporting line of the disk.
We omit an easy calculation which confirms the formula from the second statement.

\bigskip 
{\it Proof of Theorem} \ref {Delta_n}.

\medskip
It is sufficient to show that for arbitrary $C \in {\mathcal K}^d$ we have $\delta_n (C) \leq  {1/\cos a_{\lfloor {\frac n 2}\rfloor}^d}$.
Of course, we may limit the consideration to the case when the diameter of $C$ is one, so let ${\rm diam} (C) =1$ since now. 
It is well known that there exists a body $W$ of unit constant width containing~$C$ (e.g., see \cite{[BF]}, section 64). 

By the definition of the number $a^d_k$, taking $k= {\lfloor {\frac n 2}\rfloor}$ we see that there exists a family $\mathcal L$ of ${\lfloor {\frac n 2}\rfloor}$ straight lines $L_1, \dots , L_{\lfloor {\frac n 2}\rfloor}$ through the origin $o$ such that the angular distance between an arbitrary line through the origin and at least one of these lines is at most $a^d_{\lfloor {\frac n 2}\rfloor}$.
For every $i \in \{1, \dots , a^d_{\lfloor {\frac n 2}\rfloor}\}$, take the strip $S_i(W)$ of width $1$ orthogonal to $L_i$ which contains~$W$.
The intersection $I(W)$ of these strips is a polytope with at most $2{\lfloor {\frac n 2}\rfloor}$ facets. 
Clearly, $C \subset I(W)$.

{\ }

\begin{center}
\includegraphics[width=10.53cm,height=7.2cm]{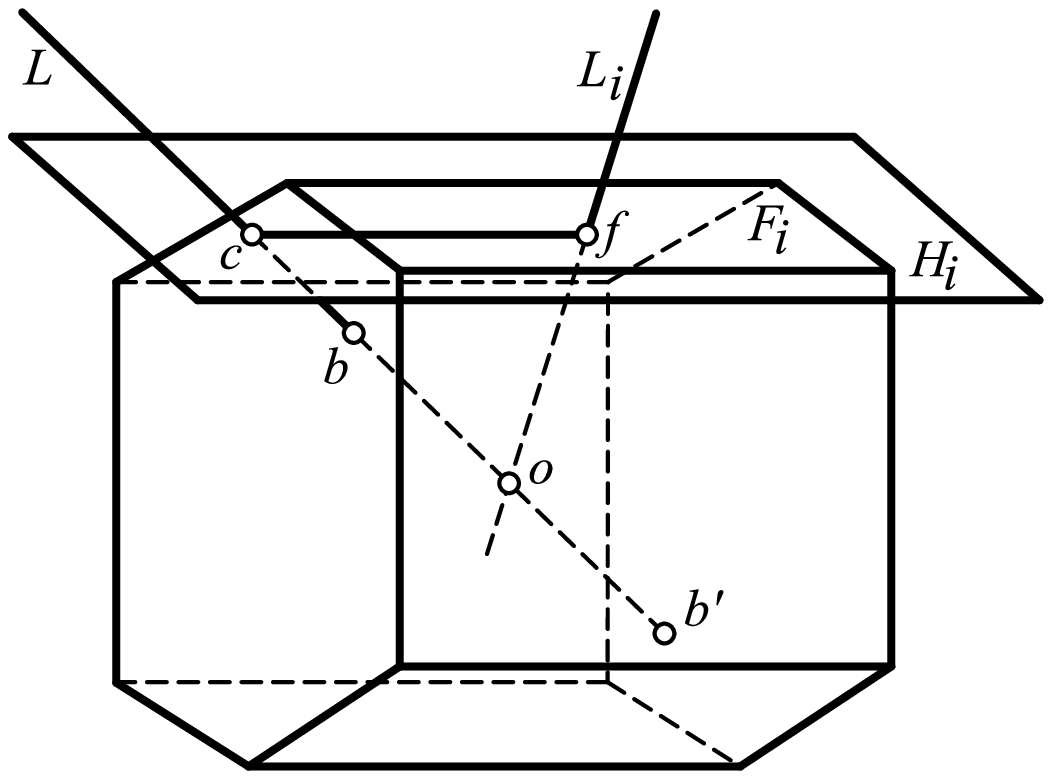}\\ 

\vskip0.2cm
\centerline
{Fig 3. Illustration to the proof of Theorem \ref{Delta_n}}
\end{center}

\vskip0.3cm

Denote by $K^*$ the image of a convex body $K$ under the central symmetrization. 
Since $W$ is a body of constant width, $W^*$ is the ball of unit width.

Clearly, $I(W)^*$ is a centrally symmetric polytope circumscribed about the ball $W^*$.
It has ${\lfloor {\frac n 2}\rfloor}$ pairs of symmetric facets orthogonal to the lines from $\mathcal L$. 
It is well known that every body and its centrally symmetrized body have equal diameters (e.g., see \cite{[BF]}, section 42).
Thus ${\rm diam}(I(W)^*) = {\rm diam}(I(W))$. 

Let $b$ be a boundary point of $I(W)^*$.
Let $L$ be the line through $o$ and $b$.
Take a line $L_i \in \mathcal L$ whose angle with $L$ is at most $a_{\lfloor {\frac n 2}\rfloor}^d$ (see Figure 3).
By the definition of $I(W)^*$ and since it is centrally symmetric we see that there exists a pair of facets  of $I(W)^*$ orthogonal to $L_i$.
Let $F_i$ be one of these two facets.
Observe that $L_i$ intersects $F_i$. 
The reason is that $F_i$ supports the ball $W^*$ at exactly one point $f$ and $L_i$ passes through $f$. 

Denote by $H_i$ the hyperplane carrying $F_i$.
Since $H_i$ is a supporting hyperplane of $I(W)^*$, we see that $b$ is on the same side of the hyperplane $H_i$ as $o$.
In particular, $b$ may be on $H_i$. 

Recall that $\mathcal L$ contains at least $d$ lines.
Since for $m \leq n$ we have $a^d_m \geq a^d_n$, and since by Lemma \ref{a_k^2} we have $a^d_d < {\frac \pi 2}$, we conclude that the angle between $L$ and $L_i$ is below $\frac \pi 2$. 
Thus there exists a point $c \in H_i$ such that $b \in oc$.

Of course, $|ob| \leq |oc|$.
From the right triangle $ofc$ and from $|of| = {\frac 1 2}$ (this follows from the fact that $f$ is a boundary point of the ball $W^*$) we see that $|oc| \leq {1/2\cos a_{\lfloor {\frac n 2}\rfloor}^d}$.
From $|ob| \leq |oc|$ we get $|ob| \leq {1/2\cos a_{\lfloor {\frac n 2}\rfloor}^d}$.
Thus $|bb'| \leq {1/\cos a_{\lfloor {\frac n 2}\rfloor}^d}$ for the symmetric boundary point $b'$ of $I(W)^*$.
Since $I(W)^*$ is centrally symmetric, from the arbitrariness of its boundary point $b$ we conclude that ${\rm diam}(I(W)^*) \leq {1/\cos a_{\lfloor {\frac n 2}\rfloor}^d}$.
Hence ${\rm diam}(I(W)) \leq {1/\cos a_{\lfloor {\frac n 2}\rfloor}^d}$.
Thus $\delta_n (I(W)) \leq {1/\cos a_{\lfloor {\frac n 2}\rfloor}^d}$.
Since $C \subset W \subset I(W)$ and ${\rm diam} (C) =1$, this implies $\delta_n (C) \leq {1/\cos a_{\lfloor {\frac n 2}\rfloor}^d}$, which ends the proof.

\bigskip
{\it Corollary \ref {Delta_6}} follows by Theorem \ref{Delta_n} taking into account the estimates presented in Section 2 of this paper.

\bigskip
{\it Corollary \ref{polygon-containing}} is a result of Theorem \ref{Delta_n} and the first statement of Lemma \ref{a_k^2}.

\end{document}